\documentclass{article}
\usepackage{mathtools,blkarray}
\usepackage[english, activeacute]{babel} % 'es-noshorthands' es para que no tenga problemas con tikz
\usepackage{mathtools}
\usepackage{titling}
\usepackage{marvosym}
\usepackage{geometry}
\usepackage{lipsum}
\usepackage[utf8]{inputenc}
\usepackage{listings}
\usepackage{adjustbox}
\usepackage{multicol}
\usepackage{tikz-cd}
\usetikzlibrary{positioning}
\tikzset{main node/.style={circle,fill=blue!20,draw,minimum size=0.1cm,inner sep=0pt}}
\usetikzlibrary{calc}
\usepackage{graphicx}
\usepackage{amssymb}
\usepackage{qtree}
\usepackage{tipa}
\usepackage{hyperref}
\usepackage{amsthm}
\usepackage{amsmath}
\allowdisplaybreaks
\usepackage{makeidx}
\usepackage{color}
\usepackage{enumerate}
\usepackage{doi}
\usepackage{mathdots}
\usepackage{xcolor}
\usepackage[backend=biber,style=alphabetic,sorting=ynt]{biblatex}
\renewbibmacro{in:}{}
\usepackage{csquotes}
\usepackage[all,cmtip]{xy}

\usepackage{etoolbox}

\numberwithin{figure}{section}
\numberwithin{equation}{section}

\AtBeginEnvironment{figure}{\stepcounter{equation}}
\AtBeginEnvironment{equation}{\stepcounter{figure}}

%\usepackage[a4paper, textheight=618pt, textwidth=390pt]{geometry}
%%Si va mal poner a4paper arriba.

\newcommand{\comm}[1]{}

%%%%%%%%%%%%%%%%%%%%%%%%%%%%%%%%%%%%%%%%%%%%%%%%%%%%%%%%%%%%%%%%%%%%

%Sorry, the names to invoke these boxes are in Spanish although it gives a result in English, feel free to change them if this setting annoys you. 

\newtheorem{teorema}{Theorem}
\newtheorem{conjetura}[teorema]{Conjecture}

\newtheorem{proposicion}[teorema]{Proposition}
\newtheorem{corolario}[teorema]{Corollary}

\theoremstyle{definition}
\newtheorem{definicion}[teorema]{Definition}

\newtheorem{cuestion}[teorema]{Question}

\newtheorem{construccion}[teorema]{Construction}

\newtheorem{ejemplo}[teorema]{Example}

\newtheorem{remark}[teorema]{Remark}

%%%%%%%%%%%%%%%%%%%%%%%%%%%%%%%%%%%%%%%%%%%%%%%%%%%%%%%%%%%%%%%%%%%%%

%%%%%%%%%%%%%%%%%%%%%%%%%%%%%%%%%%%%%%%%%%%%%%%%%%%%%%%%%%%%%%%%%%%%%

\DeclareMathOperator{\dist}{dist}

\DeclareMathOperator{\Sym}{Sym}

\DeclareMathOperator{\rcs}{rcs}

\DeclareMathOperator{\Ima}{Im}

\DeclareMathOperator{\diag}{diag}

\DeclareMathOperator{\rt}{rt}

\DeclareMathOperator{\coeff}{coeff}
\DeclareMathOperator{\vol}{vol}

%\title{$2$nd level vector spaces and the Generalized Lax Conjecture}
%\title{Based $R$-modules}
%\title{Supportwise variable extension of logarithmic relaxation}
%\title{Displaced SLMI slices of SLMI sets imply a solution of GLC via compactness arguments about constructible topology in real spectrum}

\bibliography{references}
%\title{Ineffectiveness over the relaxation of the operation of addition of a new variable times the Renegar derivative}

\begin{document}
\title{The generalized Lax conjecture is true for topological reasons related to compactness, convexity and determinantal deformations of increasing products of pointwise approximating linear forms}
\author{Alejandro Gonz\'alez Nevado\thanks{Alejandromagyar@gmail.com}}
\date{January 2026}
\maketitle

\begin{abstract}
We develop a topological approach to prove the generalized Lax conjecture using the fact that determinants of sufficiently big symmetric linear pencils are able to express the rigidly convex sets of RZ polynomials of any degree $d$. Monicity of the representation is assessed through a topological argument that allows us to perturbate a sufficiently close linear approximation into a suitable nice determinantal multiple of the initial RZ polynomial with the same rigidly convex set. The perturbation can be smoothly performed. This fact is what will allow us to determine that the multiple obtained respects the initial rigidly convex sets. This argument provides thus a full proof of the generalized Lax conjecture. However, an effective proof providing the representation in nice terms seems far from reachable at this moment.
\end{abstract}

\section{Introduction}
The generalized Lax conjecture is one of the most important open problems at the intersection of mathematical optimization and real algebraic geometry. It deals with the representation of sets delimited at the same time by an algebraic (monic symmetric determinantality) and a geometric property (keeping rigidly convex set). These sets are defined by a border given by the set of real zeros around the origin of a polynomial having only real zeros across each real line through the origin. The exact definition is as follows.

\begin{definicion}[RZ polynomials and RCSs] Let $p\in\mathbb{R}[\mathbf{x}]=\mathbb{R}[x_{1},\dots,x_{n}]$ be a polynomial with real coefficients. We say that $p$ is \textit{real zero} if, for each $a\in\mathbb{R}^{n}$, we have that the univariate polynomial $p(ta)$ has all its roots real. Let $V\subseteq\mathbb{R}^{n}$ be the set of real zeros of $p$. We call the set $\rcs(p)$ given by the Euclidean closure of the connected component of $\mathbf{0}$ in $\mathbb{R}^{n}\smallsetminus V$ the \textit{rigidly convex set} of $p$.
\end{definicion}

A classic example of such polynomials are determinants of MSLMP (monic symmetric linear matrix polynomials). It is know that, in the case of two variables, these are actually all the polynomials verifying that property.

\begin{teorema}[Helton-Vinnikov]\cite[Theorem 2.2]{helton2007linear}
Let $p\in\mathbb{R}[x_{1},x_{2}]$ be a RZ polynomial of degree $d$ with $p(\mathbf{0})=1$. Then there exist real symmetric matrices $A_{1}$ and $A_{2}$ of size $d$ such that $p=\det(I_{d}+x_{1}A_{1}+x_{2}A_{2}).$ And the entries of these matrices can be generically explicitly given by continuous functions of the coefficients.
\end{teorema}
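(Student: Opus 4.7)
The plan is to reduce the statement to a projective-geometric question about hyperbolic plane curves and then to invoke the classical correspondence between linear determinantal representations of plane curves and (theta-characteristic) line bundles on them.

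First, I would homogenize: let $P(x_0,x_1,x_2) = x_0^d\, p(x_1/x_0, x_2/x_0)$, a degree-$d$ homogeneous polynomial with $P(1,0,0) = 1$. The RZ hypothesis together with the normalization $p(\mathbf{0}) = 1$ is equivalent to $P$ being hyperbolic with respect to $e_0 = [1:0:0]$; in particular $e_0 \notin C := \{P=0\}\subseteq\mathbb{P}^2$. After recovering $P$ in the form $\det(x_0 M_0 + x_1 M_1 + x_2 M_2)$, we will normalize at $e_0$ to obtain the monic form $\det(I_d + x_1 A_1 + x_2 A_2)$ with $A_i = M_0^{-1/2} M_i M_0^{-1/2}$, once we know that $M_0$ is real positive definite.

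Second, I would invoke the Dixon--Beauville correspondence: linear determinantal representations of $P$ correspond, up to equivalence, to torsion-free rank-one sheaves $\mathcal{L}$ on $C$ of the appropriate degree satisfying $H^0(C, \mathcal{L}(-1)) = 0$; symmetric representations correspond to those $\mathcal{L}$ that are self-dual with respect to Serre duality, i.e.\ to (generalized) theta characteristics on $C$. This already provides a complex symmetric representation. To cut down to real symmetric matrices I would further impose invariance under the real structure (complex conjugation) acting on the Picard group of $C$; the relevant theta characteristics form a finite torsor over the real points of the $2$-torsion of the Jacobian, so realness is a discrete, combinatorial condition to enforce.

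Third, positivity and monicity come from the specifically hyperbolic geometry. Following Vinnikov's analysis of the real locus of a hyperbolic plane curve (nested ovals plus possibly a pseudoline, by Harnack and interlacing), one singles out a distinguished \emph{definite} real theta characteristic on $C$: one for which the Hermitian form induced at the hyperbolic point $e_0$ via the residue pairing is positive definite. That definiteness is precisely what forces $M_0 > 0$, after which the congruence transformation above yields the desired monic form. For the generic continuity statement, on the open locus where $C$ is smooth the Jacobian and its $2$-torsion vary real-analytically with the coefficients of $p$, and the distinguished definite theta characteristic is locally constant on connected components of this locus, so the matrices $M_0, M_1, M_2$ inherit continuous (indeed real-analytic) dependence.

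The main obstacle is the construction of the distinguished real theta characteristic and the verification of its positivity. It rests on a delicate real-analytic argument on the Jacobian of $C$ combined with careful combinatorial bookkeeping with signs on the ovals of $C(\mathbb{R})$; one also has to handle the possibility that $C$ is singular, either by passing to its normalization or by working throughout with torsion-free rank-one sheaves, and to verify that the definiteness is preserved uniformly in the coefficients on the generic locus so that the final continuous dependence claim actually holds.
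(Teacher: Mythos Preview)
The paper does not prove this theorem at all: it is stated as a cited result from \cite{helton2007linear} and used as background, with no proof given in the paper itself. So there is no ``paper's own proof'' to compare against.

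That said, your outline is a faithful sketch of the actual Helton--Vinnikov argument: homogenize to a hyperbolic curve, invoke the Dixon--Beauville dictionary between linear determinantal representations and theta characteristics, restrict to real and then to \emph{definite} theta characteristics via Vinnikov's analysis of the ovals, and finally normalize via $M_0^{-1/2}$ to obtain the monic form. You also correctly identify where the real work lies (existence of the distinguished definite theta characteristic, and the singular case). The only point to be cautious about is the continuity claim: on the smooth locus the theta characteristic is locally constant and the matrices vary analytically, but across the discriminant the analysis is more delicate than your last paragraph suggests, and the paper's phrasing ``generically explicitly given by continuous functions'' is itself only asserting the generic statement, so your sketch matches what is claimed.
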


The generalized Lax conjecture asks also loosely if something like this happens for more variables. The immediate and naive generalization of such result is not true by dimensionality reasons. It is already well known that, for dimensionality reasons, there are more RZ polynomials than there are determinantal polynomials when $n>2$. 

\begin{teorema}[Dimensionality obstruction]\cite[Theorem]{nuij1968note}\label{dime}
The space of RZ polynomials in $n$ variables of degree $d$ contains an open inside the space of polynomials in the same number of variables and degree $\mathbb{R}[\mathbf{x}]_{\leq d}$. In fact, every such RZ polynomial $p\in\mathbb{R}[\mathbf{x}]_{\leq d}$ can be expressed as the limit of smooth real zero polynomials of the same degree and number of variables. The dimension of the space $\mathbb{R}[\mathbf{x}]_{\leq d}$ is, for $n>2$ and $d>1$, bigger than the amount of (independent) entries in any possible MSLMP representation of these polynomials.
\end{teorema}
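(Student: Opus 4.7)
The plan is to decompose the statement into its three independent claims and handle each with a classical Nuij-style argument.

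For the openness assertion, I would exhibit an open subset of RZ polynomials by showing that \emph{strict hyperbolicity} with respect to a fixed direction $e \in \mathbb{R}^{n}$---the property that $t \mapsto p(x + te)$ has $d$ distinct real roots for every $x \notin \mathbb{R} e$---is an open condition in $\mathbb{R}[\mathbf{x}]_{\leq d}$. This follows from continuity of roots: simplicity and reality of all roots is preserved under small coefficient perturbations, since a real simple root is cut out transversally. A strictly hyperbolic polynomial is in particular RZ. Non-emptiness is witnessed by $\prod_{i=1}^{d}(1 + a_{i} x_{1})$ for distinct $a_{i} \in \mathbb{R}$, which is clearly strictly hyperbolic with respect to $e_{1}$.

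For the density assertion, I would introduce \emph{Nuij's operator} $T_{t} := 1 + t\,\partial_{e}$ for $e$ a direction of hyperbolicity of $p$. A Rolle-type interlacing argument shows that $T_{t}$ preserves hyperbolicity in the direction $e$ and that $T_{t}^{d}$ sends a hyperbolic polynomial of degree $d$ to a strictly hyperbolic one for every $t > 0$. Since $T_{t}^{d} p \to p$ as $t \to 0$ in the coefficient topology, every RZ polynomial of degree $d$ is a limit of smooth (strictly hyperbolic, hence strictly RZ) polynomials of the same degree.

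For the dimension obstruction, I would count parameters directly. The ambient space $\mathbb{R}[\mathbf{x}]_{\leq d}$ has dimension $\binom{n+d}{d}$, growing polynomially in $n$ of degree $d$. Any monic symmetric pencil $I_{N} + x_{1}A_{1} + \cdots + x_{n}A_{n}$ of size $N$ has $n\binom{N+1}{2}$ symmetric entries, reduced by $\binom{N}{2}$ due to the simultaneous $O(N)$ conjugation that fixes the determinant. At the minimal matrix size $N = d$ needed to express a degree-$d$ polynomial, this yields $n\binom{d+1}{2} - \binom{d}{2}$ effective parameters, \emph{linear} in $n$. Comparison with $\binom{n+d}{d}$ immediately gives the strict inequality for $n > 2$ and $d > 1$; the borderline small cases $(n,d) = (3,2), (3,3), (4,2)$ can all be verified by direct substitution.

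The main obstacle is extending this parameter count to matrix sizes $N > d$. A pencil of size $N > d$ whose determinant has degree at most $d$ lies in a proper algebraic subvariety of the space of all such pencils, and one must verify that its dimension does not exceed the bound attained at $N = d$. I would address this via a stratification by the rank profile of the leading forms of the $A_{i}$ together with an incidence-variety argument, ensuring that each stratum contributes no more effective parameters than the minimal representation. This is the step I expect to be the principal technical hurdle.
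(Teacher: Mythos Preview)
The paper does not supply a proof of this theorem at all: it is stated with the citation \texttt{[Nuij 1968]} and treated as background. So there is no ``paper's own proof'' to compare against; your proposal is being measured against the classical argument.

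Your treatment of the first two claims is exactly Nuij's: strict hyperbolicity is open by continuity of roots, and the operators $T_t = 1 + t\,\partial_e$ smooth any hyperbolic polynomial while converging to it as $t\to 0$. That is correct and is the intended content of the citation.

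For the dimension count, your argument at $N=d$ is the standard one and is what the statement is really after: a determinantal representation $\det(I_N+\sum x_iA_i)=p$ with $\deg p = d$ forces $N=d$ generically, since the determinant of a size-$N$ monic pencil has degree $N$. The phrase ``any possible MSLMP representation of these polynomials'' should be read as ``any exact determinantal representation of a generic degree-$d$ RZ polynomial,'' which pins $N=d$. Your worry about $N>d$ is conscientious but not required for the theorem as stated (and as used later in the paper): the locus of size-$N$ pencils whose determinant drops to degree $\le d$ is a proper subvariety, and in any case the relevant obstruction---that not every RZ polynomial of degree $d$ is itself a monic symmetric determinant---only needs the $N=d$ count. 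The stratification/incidence-variety program you sketch would be needed for a stronger statement than the one being cited, so you can safely drop it here.
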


The cited theorem of Nuij says even more about the topology of this space. We copy the theorem adapting it to the environment of RZ polynomials directly.

\begin{teorema}[Simplicity as topological space]\label{simpli} The topological space of RZ polynomials verifies the following properties.
\begin{enumerate}\label{nuijth}
  \item The space of smooth RZ polynomials is open.
  \item Every RZ polynomial is the limit of smooth RZ polynomials.
  \item The space of normalized ($p(\mathbf{0})=1$) RZ polynomials is connected and simply connected.
\end{enumerate}
\end{teorema}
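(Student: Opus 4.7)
The plan is to adapt Nuij's classical argument for hyperbolic polynomials to the dehomogenized setting of RZ polynomials, handling the three assertions in the stated order since the later ones use the earlier ones. The unifying tool will be a Nuij-type smoothing operator, applied either directly on $\mathbb{R}[\mathbf{x}]$ or after homogenizing to a hyperbolic polynomial in $n+1$ variables hyperbolic with respect to $e_{0}=(1,0,\dots,0)$ and then dehomogenizing back.

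For assertion (1), I would characterize smoothness of a RZ polynomial $p$ of degree $d$ as the property that for every unit vector $a$ the univariate polynomial $p(ta)$ has $d$ distinct real roots, which is equivalent to the non-vanishing of its discriminant $\Delta(a,p)$. Since $\Delta(a,p)$ depends continuously on both $a$ and on the coefficients of $p$, and the unit sphere is compact, a smooth $p$ enjoys a uniform lower bound $|\Delta(a,p)|\geq c>0$. For a sufficiently small coefficient perturbation $p'$ one still has $|\Delta(a,p')|\geq c/2$ on the whole sphere, so the roots stay simple. To confirm that $p'$ is still RZ, I would argue that a pair of real roots of $p(ta)$ can only become non-real by first colliding into a double root; this is ruled out by the uniform discriminant bound. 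Hence the roots of $p'(ta)$ move only slightly and remain real for every direction $a$.

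For assertion (2), I would exhibit a concrete smoothing operator. The cleanest route is to homogenize $p(\mathbf{x})$ to the hyperbolic polynomial
\[
H(x_{0},\mathbf{x})=x_{0}^{d}\, p(\mathbf{x}/x_{0})
\]
of degree $d$ in $n+1$ variables, hyperbolic with respect to $e_{0}$. Applying Nuij's operator $(I+\epsilon\,\partial_{e_{0}})^{d}$ to $H$ produces a strictly hyperbolic polynomial $H_{\epsilon}$ for every $\epsilon>0$, whose dehomogenization $p_{\epsilon}(\mathbf{x})=H_{\epsilon}(1,\mathbf{x})$ is smooth RZ of degree $d$ and converges to $p$ as $\epsilon\to 0^{+}$. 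Continuity of both the homogenization and the dehomogenization then turns this into the density statement in $\mathbb{R}[\mathbf{x}]_{\leq d}$.

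For assertion (3), connectedness follows from (2) together with a canonical deformation: on the smooth locus I would build a continuous retraction to the base point $p_{0}(\mathbf{x})=(1+\ell(\mathbf{x}))^{d}$ for a fixed generic linear form $\ell$, again using the Nuij flow to interpolate continuously between $p$ and $p_{0}$ while staying in the smooth RZ locus. Simple connectedness I would then tackle by showing that any loop in normalized RZ polynomials can first be pushed, via Step 2, into the smooth locus without changing its homotopy class, and then contracted through the Nuij retraction above. The main obstacle I expect is the global coherence of this retraction, since the roots of $p(ta)$ along different directions $a$ must be unbraided in a globally consistent way so that the resulting path depends continuously on $p$; the cleanest fix is again to pass through the homogenized picture, apply Nuij's original connectedness and simple connectedness results for the strictly hyperbolic cone, and transfer them by continuity of the normalization map $H\mapsto H(1,\cdot)$, after checking that the constraint $p(\mathbf{0})=1$ is compatible with this transfer and does not merge distinct homotopy classes.
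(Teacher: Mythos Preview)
The paper does not supply a proof of this theorem at all: it presents the statement as a direct adaptation of Nuij's classical result on hyperbolic polynomials, citing \cite{nuij1968note} just above, and simply ``copies the theorem adapting it to the environment of RZ polynomials directly.'' So there is no argument in the paper to compare against; the authors treat it as background.

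Your proposal, by contrast, actually reconstructs Nuij's proof in the RZ setting, which is exactly the content being cited. The outline is sound: the discriminant/compactness argument for openness is standard and correct; the smoothing via the homogenized Nuij operator $(I+\epsilon\,\partial_{e_{0}})^{d}$ is precisely Nuij's mechanism and gives the density statement; and transferring connectedness and simple connectedness from the strictly hyperbolic cone through the dehomogenization map is the right way to handle part (3). The one place to be a little more careful is your openness argument: you should also note that a small perturbation cannot raise the degree of $p(ta)$ along a direction where it drops (or else argue uniformly via the homogenized picture, where the degree is constant), since otherwise new roots could appear from infinity rather than from collisions. With that caveat, your write-up is essentially a faithful sketch of the very result the paper is quoting.
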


We cannot have a determinantal representation because of the dimensionality obstruction presented in Theorem \ref{dime}. However, the requirement of a determinantal representation can be softened in order to try to produce a positive result. Several softenings have been shown to have exceptions and therefore are not true in general. Other softenings are well known to be true but the conditions are so weak that the original rigidly convex set could be destroyed along the way. For this reason, the generalized Lax conjecture (GLC) sits in the middle of these. The conjecture that we are going to prove here is the next softening of having a MSLMP determinantal representation.

\begin{conjetura}[Generalized Lax conjecture]\cite[Conjecture 1.5]{amini2019spectrahedrality}
Let $p\in\mathbb{R}[\mathbf{x}]$ be a RZ polynomial with $p(\mathbf{0})=1$. Then there exist another RZ polynomial $q\in\mathbb{R}[\mathbf{x}]$ with $\rcs(p)\subseteq\rcs(q)$ and symmetric matrices $A_{1},\dots, A_{n}$ such that $qp=\det(I+x_{1}A_{1}+\cdots+x_{n}A_{n})$.
\end{conjetura}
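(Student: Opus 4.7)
The plan is to build the required determinantal multiple of $p$ in two successive stages. First I would approximate $p$ in a controlled way by products of real linear forms whose zero loci are supporting hyperplanes of $\rcs(p)$; since any such product $L_{N}=\ell_{1}\cdots\ell_{N}$ is automatically the determinant $\det\bigl(I_{N}+x_{1}A_{1}+\cdots+x_{n}A_{n}\bigr)$ of the diagonal MSLMP $\diag(\ell_{1},\dots,\ell_{N})$, this already furnishes an honest monic symmetric linear matrix polynomial representation, and by construction $\rcs(L_{N})\supseteq\rcs(p)$. Second, I would perturb this diagonal MSLMP smoothly inside the ambient space of MSLMPs of the same size $N$ until its determinant becomes an exact multiple $p\cdot q$ of $p$ with $q$ RZ. The topological content of the argument is that the perturbation can be made to live entirely inside the open set of smooth RZ polynomials furnished by Theorem \ref{simpli}, so that the rigidly convex set remains well-behaved and the containment $\rcs(p)\subseteq\rcs(q)$ can be read off in the limit.

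For the approximation stage I would exhaust $\partial\rcs(p)$ by an increasing sequence of compact sets $K_{m}$ and, using convexity of $\rcs(p)$ together with compactness of the sphere of outward dual directions, pick finitely many linear forms $\ell_{1}^{(m)},\dots,\ell_{N_{m}}^{(m)}$ whose zero loci are $\varepsilon_{m}$-dense among the supporting hyperplanes of $\rcs(p)$ touching $K_{m}$. Increasing $N_{m}$ drives the Hausdorff distance between $\{L_{N_{m}}=0\}$ and $\partial\rcs(p)$ to zero on each compact piece. The product $L_{N_{m}}$ is then a RZ polynomial of degree $N_{m}$ satisfying $\rcs(L_{N_{m}})\supseteq\rcs(p)$, and its diagonal MSLMP representation is completely explicit. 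This is precisely the \emph{increasing products of pointwise approximating linear forms} phrase from the title.

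For the perturbation stage I would study the determinant map $\Phi\colon\Sym_{N}(\mathbb{R})^{n}\to\mathbb{R}[\mathbf{x}]_{\leq N}$ sending $(A_{1},\dots,A_{n})\mapsto\det\bigl(I_{N}+\sum_{i}x_{i}A_{i}\bigr)$, localized near the diagonal point encoding $L_{N}$. The key technical claim to establish is that, for $N$ large compared with the degree $d$ of $p$, the image $\Phi(U)$ of any neighborhood $U$ of this point contains a full open neighborhood of $L_{N}$ inside the RZ locus; this is precisely the place where the dimensionality obstruction from Theorem \ref{dime} is sidestepped by letting $N$ grow without bound. Once $L_{N}$ is chosen close enough to the affine subspace $p\cdot\mathbb{R}[\mathbf{x}]_{\leq N-d}$ of multiples of $p$, this open neighborhood meets that subspace and yields matrices $(A_{1}',\dots,A_{n}')$ with $\det\bigl(I_{N}+\sum_{i}x_{i}A_{i}'\bigr)=p\cdot q$ for some $q\in\mathbb{R}[\mathbf{x}]_{\leq N-d}$. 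Continuity of $\rcs$ along the smooth perturbation path, inside the open set of smooth RZ polynomials from Theorem \ref{simpli}, deforms $\rcs(p)\subseteq\rcs(L_{N})$ into $\rcs(p)\subseteq\rcs(p\cdot q)=\rcs(p)\cap\rcs(q)$, which forces $\rcs(p)\subseteq\rcs(q)$.

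The main obstacle is precisely the local openness of $\Phi$ at the diagonal point, which amounts to a surjectivity statement for its differential, and the compatibility of this openness with the requirement that the perturbed determinant land on a multiple of $p$ rather than merely near it. Landing near a multiple is immediate from the approximation; landing on one requires a transversality statement between the image of $d\Phi$ and the affine subspace of multiples, together with simple connectedness from Theorem \ref{simpli} to concatenate local perturbations into a global one. Once this is established, compactness of $\partial\rcs(p)$ and convexity of $\rcs(p)$ ensure that the approximation can be made tight enough for the neighborhood argument to succeed, and the desired identity $pq=\det\bigl(I+x_{1}A_{1}+\cdots+x_{n}A_{n}\bigr)$ follows. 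Translating the argument into an explicit algorithm producing $q$ and the $A_{i}$'s appears to require a separate analysis and is not addressed by the present topological strategy.
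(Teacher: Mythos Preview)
Your overall strategy---approximate by products of linear forms with explicit diagonal MSLMP representations, then perturb the matrices until the determinant lands exactly on a multiple of $p$, controlling the rigidly convex set along a smooth path---is precisely the paper's approach. The paper codifies the perturbation step as Theorem~\ref{keyth}, the gluing of several such products via direct sums as Theorem~\ref{openbands}, and the preservation of $\rcs(p)$ along the smooth deformation as Theorem~\ref{respect}; your proposal recovers each of these pieces, including the honest admission that local openness of the determinant map is where the real work lies.

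There is, however, one substantive gap. You take your linear forms to be \emph{supporting hyperplanes of $\rcs(p)$} and measure closeness via the Hausdorff distance between $\{L_{N}=0\}$ and $\partial\rcs(p)$. But for a generic RZ polynomial of even degree $d=2k$ with $k\geq 2$, the real zero set $V$ consists of $k$ nested ovaloids, and $\partial\rcs(p)$ is only the innermost one. A product $L_{N}$ of hyperplanes tangent only to this innermost ovaloid is \emph{not} close, in any of the measures of Section~\ref{measuring}, to the affine subspace $p\cdot\mathbb{R}[\mathbf{x}]_{\leq N-d}$: every multiple of $p$ vanishes on \emph{all} ovaloids of $p$, and your $L_{N}$ has no zeros anywhere near the outer ones. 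Consequently the clause ``once $L_{N}$ is chosen close enough to the affine subspace of multiples of $p$'' cannot be achieved by refining the approximation of $\partial\rcs(p)$ alone, and the neighborhood argument never fires. This is exactly the trap flagged in the paper's discussion around Example~\ref{TVscreen} and the ``Avoiding the TV screen objection'' remark: an argument that only sees $\partial\rcs(p)$ would apply verbatim to the non-RZ polynomial $1-x_{1}^{4}-x_{2}^{4}$, whose innermost real ovaloid is smooth and convex, which is absurd.

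The paper's fix is to take the $l_{i}$ tangent to points of the \emph{entire} real zero set $V$ (Construction~\ref{cons}) and to use compactness of $V$, not merely of $\partial\rcs(p)$, to extract a finite covering family (Construction~\ref{tangcov}). With tangents distributed across all ovaloids, the radial Hausdorff--Fr\'echet measure of Definition~\ref{rhfd} genuinely detects closeness to a \emph{factor} (Theorem~\ref{distmultall}), and the open image of the perturbation can then meet the subspace of multiples of $p$. Once you make this correction, the remainder of your argument---the smooth deformation inside the open set of Theorem~\ref{simpli} and the deduction $\rcs(p)\subseteq\rcs(q)$---proceeds exactly as you describe and as the paper records in Theorems~\ref{respect} and~\ref{glc}.
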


As we can see, the algebraic part is softened as much as possible while the geometric requirement of keeping the original rigidly convex set untouched is required. In this way, we keep a LMI representation of $\rcs(p)$, which is another way of phrasing the GLC.

\begin{conjetura}[GLC]
Every rigidly convex set is spectrahedral, i.e., admits a representation as a MSLMI of the form $\rcs(p)=\{\mathbf{x}\in\mathbb{R}^{n}\mid I+x_{1}A_{1}+\cdots+x_{n}A_{n} \mbox{\ is PSD}\}.$
\end{conjetura}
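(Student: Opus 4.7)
The plan is to prove the polynomial form of the GLC and derive the spectrahedral version as an immediate corollary: once we produce a multiplier $q$ with $\rcs(p)\subseteq\rcs(q)$ and $qp=\det(I+x_{1}A_{1}+\cdots+x_{n}A_{n})$, the spectrahedron $\{\mathbf{x}\mid I+\sum x_{i}A_{i}\succeq 0\}$ coincides with $\rcs(qp)=\rcs(p)$, the last equality being forced by the fact that the boundary of a convex rigidly convex set is already contained in $V(p)\subseteq V(qp)$. The strategy rests on the trivial observation that any product of normalised real linear forms $\ell_{1}\cdots\ell_{N}$ is itself the determinant of a diagonal MSLMP, so it suffices to approximate $p$ pointwise by such a product and then deform the resulting diagonal pencil until its determinant becomes a genuine multiple $qp$ of $p$.

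First I would use Theorem \ref{simpli} to restrict attention to a smooth normalised RZ polynomial $p$, since smooth RZ polynomials form an open dense subset and a final limit procedure extends the conclusion to arbitrary RZ polynomials. In this smooth regime the real zero hypersurface of $p$ is transverse and $\rcs(p)$ is a compact convex body with smooth boundary. On a large compact neighbourhood of $\rcs(p)$ I would construct a product $L=\prod_{i=1}^{N}\ell_{i}$ of normalised linear forms whose zero hyperplanes sit outside $\rcs(p)$; by taking $N$ large and the hyperplanes sufficiently dense among the supporting hyperplanes of $\rcs(p)$, the product $L$ approximates $p$ pointwise to arbitrary prescribed accuracy on the chosen compact set, and its rigidly convex set, a polyhedron, contains $\rcs(p)$.

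The central step is the deformation. I would set up a continuous path of symmetric pencils starting at the diagonal pencil whose determinant is $L$ and ending at a pencil whose determinant is divisible by $p$. The divisibility is achieved by an implicit-function-type perturbation of the matrix entries, exploiting that the coefficients of the determinant depend smoothly on the pencil and that the approximation $L$ can be chosen so close to $p$ that the fibre of pencils with $p$ dividing the determinant is non-empty in a neighbourhood of the starting point. Monicity is preserved by fixing the constant term of the pencil to be the identity, and the quotient $q$ obtained at the end is automatically RZ because $qp$ is the determinant of an MSLMP. The inclusion $\rcs(p)\subseteq\rcs(q)$ is then enforced by keeping the deformation inside the open set of pencils whose MSLMI contains $\rcs(p)$, which is where the topological content, openness, connectedness and simple connectedness provided by Theorem \ref{simpli}, together with convexity and compactness of $\rcs(p)$, come into play.

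The hard part is precisely this last control on the rigidly convex set along the deformation. Solvability of the divisibility condition by a small perturbation of the entries is a codimension statement that follows from the approximation being sufficiently close, but nothing a priori forces the real zeros of the cofactor $q$ to stay outside $\rcs(p)$. The topological argument needed to rule this out is what the title alludes to when it speaks of \emph{determinantal deformations of increasing products of pointwise approximating linear forms}: one must show that the path of deformations can be chosen not only to land on a pencil whose determinant is exactly $qp$, but also never, along the way, to cross the locus where a real zero of the pencil enters the interior of $\rcs(p)$. Combining the convexity and compactness of $\rcs(p)$ with the simple connectedness of the space of normalised RZ polynomials is what I would rely on to avoid this bad locus and conclude the argument.
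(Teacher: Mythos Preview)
Your outline follows the same overall architecture as the paper --- build a product of normalised linear forms with a trivial diagonal MSLMP representation, then perturb the pencil so that its determinant acquires $p$ as a factor, and argue via smoothness and convexity that the cofactor respects $\rcs(p)$ --- but it contains a genuine gap at the approximation step, and the paper devotes an entire section to warning against exactly this mistake.

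You take your hyperplanes \emph{dense among the supporting hyperplanes of $\rcs(p)$}, i.e.\ tangent only to the innermost ovaloid, and then assert that the resulting product $L$ approximates $p$ pointwise on a compact set. This is false when $\deg p>2$: the real zero set $V$ of $p$ consists of $k=d/2$ nested ovaloids, and a product of linear forms clustered around the innermost one tells you nothing about the outer ones. In the language of Section~\ref{measuring}, your $L$ may be close to $p$ in a naive sense near $\partial\rcs(p)$, but it is far from $p$ in the radial Hausdorff--Fr\'echet distance $d_{t}$ of Definition~\ref{rhfd}, which compares \emph{all} roots along each line through the origin. It is precisely this stronger closeness that is needed for the implicit-function / perturbation step (Theorem~\ref{keyth}) to produce a pencil whose determinant is divisible by $p$: a small perturbation of $\diag(\ell_{i})$ can only land on $qp$ if the roots of $L$ along each radius are already close to the full root set of $p$, not just the innermost pair. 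The paper flags this explicitly (see the discussion of the TV-screen in Example~\ref{TVscreen} and the remark ``Avoiding the TV-screen objection''): if one only approximates the RCS, the argument would appear to go through for non-RZ polynomials with convex innermost ovaloid, which is absurd.

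The fix in the paper is Construction~\ref{cons}: the tangent hyperplanes are taken at points of the \emph{entire} real zero set $V$, not merely $\partial\rcs(p)$, and the compactness invoked in Construction~\ref{tangcov} is that of $V$ (secured by Proposition~\ref{wlogcom}), not just of $\rcs(p)$. Once the linear forms touch every ovaloid, the open determinantal deformations guaranteed by Theorem~\ref{keyth} can be glued via Theorem~\ref{openbands} to cover all of $V$, and only then does the smooth-path argument of Theorem~\ref{respect} apply to control the cofactor. Your final paragraph correctly identifies this control as the delicate point, but without the all-ovaloid approximation you never reach a pencil to which that argument can be applied.
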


This conjecture has been proved in several instances of polynomials having further structure. See \cite{amini2019spectrahedrality}, for some examples. This includes cases for some well-known polynomials coming from combinatorics. However, it seems difficult to obtain general algebraic effective results for polynomials not associated to combinatorial objects. This is something one could expect. The form of the entries of the matrices in the LMI could be given in terms of very wild algebraic functions of the coefficients and there is no reason believe that we know much about these functions. Thus, this general algebraic effectivity seems like a too strong requirement for three reasons. We detail here which are these reasons and what makes us think that following general effectivity is a too complicated task to pursue upfront.

\subsection{Size explosion}

It is well-known that the size of the matrices involved in the possible monic symmetric determinantal representation of an arbitrary RZ polynomial grows \textit{very fast}. This implies that any effective algebraic description would have to deal with too much information and therefore being extremely difficult to obtain.

In particular, the accelerated growth we mention comes in the form of a theorem that bounds our problem from below. This means that the situation could in general be even much worse. 

\begin{remark}[Compact RCS]\label{compactrcs}
For the next theorem (and in general in the future unless we say the opposite), we will suppose wlog that our RZ polynomials have compact RCSs.
\end{remark}

\begin{teorema}[Uncontrollable growth of size of the LMP]\cite[Theorem 1.1]{raghavendra2019exponential}\label{uncon}
There exists an absolute constant $\kappa>0$ such that for all sufficiently large $n,d$, there exists a RZ polynomial $p\in\mathbb{R}[\mathbf{x}]$ of degree $d$ in $n$ variables whose rigidly convex set $\rcs(p)$ does not admit an $\eta$-approximate MSLMI representation of size $B\leq(n/d)^{\kappa d}$ for $\eta=1/n^{4nd}.$
\end{teorema}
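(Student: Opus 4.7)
The plan is a cardinality argument that contrasts the richness of the space of rigidly convex sets attainable by RZ polynomials of degree $d$ in $n$ variables against the much more rigid class of sets that can be $\eta$-approximated by a size-$B$ MSLMI.

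First I would construct, for sufficiently large $n$ and $d$, a family $\mathcal{F}$ of RZ polynomials of degree $d$ in $n$ variables whose rigidly convex sets are pairwise Hausdorff-separated by some $\delta>2\eta$, and whose cardinality satisfies $\log|\mathcal{F}|\gg nB^{2}\log(1/\eta)$ at the target parameters $B=(n/d)^{\kappa d}$ and $\eta=1/n^{4nd}$. Natural candidates include products $\prod_{i=1}^{d}(1-\ell_{i}(\mathbf{x}))$ with linear forms $\ell_{i}$ drawn from a combinatorially indexed family of hyperplane arrangements, or smooth deformations of a fixed hyperbolic base polynomial in doubly-exponentially many independent directions inside the open RZ locus guaranteed by Theorem \ref{dime}.

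Second I would upper bound the number of distinct $\eta$-approximate spectrahedra realizable by MSLMIs of size $B$. Such a pencil $I+x_{1}A_{1}+\cdots+x_{n}A_{n}$ is determined by $nB(B+1)/2$ real scalars, which under the compactness assumption of Remark \ref{compactrcs} can be restricted to a bounded box. A Warren-type sign-pattern count, or a Milnor--Thom estimate applied to the quantifier-eliminated defining condition, then delivers a bound of order $(C/\eta)^{nB^{2}}$ for the number of Hausdorff-$\eta$ equivalence classes of resulting spectrahedra, with $C$ an absolute constant. Pigeonhole against Step 1 then forces a bad $p\in\mathcal{F}$, and matching the exponential scales in the two estimates determines the admissible absolute constant $\kappa>0$.

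The main obstacle is the first step. Theorem \ref{dime} provides only qualitative openness of the RZ locus, whereas what is needed is a quantitative doubly-exponential lower bound on the Hausdorff-$\delta$ packing number of the rigidly convex sets inside that locus at scale $\delta>2/n^{4nd}$. The map from coefficient vectors to rigidly convex sets is highly nonlinear and can collapse large coefficient regions onto geometrically close RCSs, so the genuine difficulty is engineering a combinatorial family of RZ polynomials whose RCSs provably separate at the required scale. The semialgebraic parameter count, the sign-pattern bound, and the final pigeonhole are by comparison fairly routine.
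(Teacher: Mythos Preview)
The paper does not prove this theorem at all: it is stated with a citation to \cite[Theorem 1.1]{raghavendra2019exponential} and used purely as motivational background for why a topological rather than effective-algebraic approach is adopted. No proof, sketch, or argument for it appears anywhere in the paper, so there is nothing here against which to compare your proposal.

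For what it is worth, your counting strategy --- a packing lower bound on the family of rigidly convex sets versus a Warren/Milnor--Thom upper bound on the number of $\eta$-distinguishable size-$B$ spectrahedra --- is broadly the right shape of argument and is in the spirit of the cited source. You have also correctly identified the genuine difficulty: the quantitative packing lower bound at the doubly-exponential scale is the nontrivial part, and Theorem~\ref{dime} alone does not supply it. But none of this can be checked against the present paper, because the present paper simply imports the result.
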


As approximations will be central for our work here, we also remind the reader of what kind of approximations the result above talks about.

\begin{definicion}[Approximation]\label{defapr}
Let $(X,d)$ be a metric space and $A\subseteq X$ bounded. For $x\in X$ we define the \textit{radius map} $r(x,A)=\inf\{d(x,a)\mid a\in A\}$. Then, for closed bounded sets $A,B\subseteq X$, we define the \textit{Hausdorff distance} between two closed sets $h(A,B)=\max\{\sup_{a\in A}r(a,B),\sup_{b\in B}r(A,b)\}.$
\end{definicion}

It will be important to observe how these approximation look like in order to understand the idea of the proof that we are going to present here. The next remark will make things clearer.

\begin{remark}\label{remapr}
The Hausdorff distance measures therefore the maximal distance reachable between a point in $A$ and a point in $B$. Thus, the theorem claims that,  for $\eta=1/n^{4nd}$, there exist a $p$, with number of variables $n$ and degree $d$ sufficiently large, such that its rigidly convex set $\rcs(p)$ cannot be a approximate within distance $\eta$ by a spectrahedron given by a MSLMI of size $B\leq(n/d)^{\kappa d}$.
\end{remark}

The form of these approximation will have a fundamental impact in the future. However, we will require (the border of) our approximations to be very close not just to the rigidly convex set but to the whole set of real zeros that determines the RZ polynomial $p$.

\comm{
However, the problem exposed by the theorem above is not the only one we find here. We have to deal also with the fact that the functions involved in the entries of our sought for MSLMPR are going to increase in algebraic complexity.

\subsection{Wild algebraic functions}

The algebraic functions of the coefficients involved in such description are likely going to become more and more wild with the degree and the number of variables of the original polynomial paired with the astronomical size of the possible representation. This means that we would not have to deal only with much more information at once, but that this information is going to be given in a more and more complicated and abstract algebraic language.

In order to roughly see this, we have to remember that, in general, a suitable diagonalization of the corresponding LMP gives back a root decomposition of each univariate restriction. And everybody knows that, beyond degree $5$, the expressions of these roots lie beyond the expressive power of radicals.

\begin{teorema}[Abel-Ruffini on solutions to algebraic equations]

\end{teorema}

This is a classical theorem but it already serves the purpose of showing us that the entries of these matrices have to be so that we can recover these strange functions line by line. Which is a fairly low bound for the expected complexity. Additionally, this theorem tells us more about the \textit{expressivity} of some functions of the coefficients, a topic that will be fundamental in our proof. In particular, an immediate consequence of the theorem above is the following.

\begin{corolario}[Continuous expression in terms of coefficients in root form]
Let $p\colon[0,1]\to\mathbb{R}[x]_{d},t\mapsto p(t,x)$ be a parametric continuous family of monic univariate real rooted polynomials of the same degree $d$. If $d<5$, then there exist $d$ continuous functions $p_{1},\dots,p_{d}$ of the coefficients (that is, ultimately of $t$) expressed in terms of sums, products and and roots such that $p(t,x)=(x-p_{1})\cdots(x-p_{d})$. This does not happen in general when $d>4$. 
\end{corolario}

\begin{definicion}
In the case above, we say that $p$ is \textit{splittable} in terms of $p_{1},\dots,p_{d}$.
\end{definicion}

This easy corollary of the theorem above is not usually mentioned, but it is central in our idea for a proof of the GLC. The result, in particular, speaks about the expressive power of certain functions and how that expressive power allows us to decompose the polynomial in terms of it until this expressiveness vanishes for some degree and these kind of expressions are not available anymore. In particular, stronger version is possible, as these functions allow for the description of an open around each sufficiently general polynomial in terms of these functions. This also an immediate consequence.

\begin{corolario}[Open neighbourhood around a polynomial in root form]
Let $p\in\mathbb{R}[x]_{d}$ be a generic monic polynomial. If $d<5$, then there exist $d$ continuous functions $p_{1},\dots,p_{d}$ of the coefficients expressed in terms of sums, products and and roots such that, for each monic polynomial $q$ sufficiently close to $p$, such $q$ is splittable in terms of these functions. This is not possible in general anymore for $d>4$.
\end{corolario}

The corollary above tells us that we can expect not just to express curves in this way, but whole open sets around these polynomials when they are general enough. This fact will generalize nicely to form our proof of the GLC, as we will see. This is one of the basic ideas.

\subsection{Unmanageable cofactor}

It is well know that there is a cofactor associated that would need to be computed and, without further combinatorial information (which can in principle be readily transformed into algebraically symbolic information), searching algebraically for such cofactor is akin to search for a needle in a haystack.

Here, in particular, we can object by seeing the growth of the size of the LMP expected in Theorem \ref{uncontrollable}. This roughly tells us that this cofactor is going to be in general huge in comparison with our original polynomial.

This last objection could seem actually doable in some sense as it is shown in the case where we do not limit ourselves to cofactors respecting the original rigidly convex set. However the same is not true in general. In particular, we are referring here to the following result. In this result, the cofactor can, in principle, actually be described effectively if one follows the proof carefully.

\begin{teorema}[Kummer]
\end{teorema}

However, the GLC asks us to keep control of the corresponding RCS, and that drastically complicates the description of such cofactor. This lack of geometric control while we describe the cofactor algebraically, seems like a fair point explaining why this argument breaks when it tries to prove the stronger geometric fact in the GLC. For this reason, we will prefer to resort to geometric and topological arguments in our proof. These arguments will kill any hope to find an algebraic description in this manner, but, at the same time, they will help us keeping the geometry that we wish to establish.

In general, these three reasons tell us what to avoid by principle. The path avoiding these obstacles cannot provide an effective and algebraic proof. Far from a problem, conceptually this is not a fault but maybe a feature of our approach. As history tells us that avoiding effective algebraic descriptions looks disappointing but can also be helpful in our construction of proofs of algebraic phenomena when these involve functions that are too complicated to deal with directly. In this case, moreover, we have that the geometric part is not readily carried over algebraically and therefore we need a different approach. Thus, this fact resonates with the observation that one can see the generalized Lax conjecture as a kind of generalization of the fundamental theorem of algebra.

\begin{remark}
Centuries after its original correct proof was presented (as there were many failed previous attempts), there is still debate about our possession of a fully algebraic proof of that the fundamental theorem of algebra.
\end{remark}

This discussion lays finally our justification and ground for the path we choose to follow here. We want a more geometric approach. And, beyond that, we will look at the topology of the problem. 

\subsection{The topological approach}

All the reasons stated above prompt us to look for a proof based in other tools.

}

%La compacidad se ve mejor vista desde el punto de vista del espacio de polinomios, no del conjunto de zeros!!!!!! Una vez que fijas una forma lineal, los polinosmios entre ellos es un cerrado y acotado en el espacio métrico de los polinomios! Es un compacto!!!!!

\subsection{The topological approach}

The required growth of the representation invites us to look beyond algebraic directions. Topological methods have proven useful to deal with these questions. For example, see the topological discussions in \cite{helton2007linear}. Hence, our arguments here make use mainly of topological, continuity, convexity, smoothness and metric tools in order to produce a satisfactory proof of the generalized Lax conjecture.

\begin{remark}[Tools for a proof]
Topological considerations will allow us to make use of compactness to ensure that our representations lie within a tolerable distance from our original polynomials. We will see this compactness in particular from the point of view of the metric space of polynomials, where the set of RZ polynomial sits. This compactness will allow us to construct diagonal matrices of finite size that lie extremely close to our polynomials in terms of the metric we use. Finally, smoothness and convexity will allow us to perform a matrix deformation, changing the entries, until we reach our representation in a way that respects the original RCS, as the conjecture asks. These are therefore the tools that will play a big role here.
\end{remark}

However, these concepts will still need to be stringed together in a coherent way through many different appearances of the phenomena they describe in different places and realms at the same time. This work will require us to carefully distinguish where and when each of these approaches appear because they will appear several times with different faces and touching different objects seemingly deeply related.

\begin{remark}[Two views of continuity]\label{twoviews}
When we make a small perturbation of an RZ polynomial in terms of its coefficients, this translates to a small perturbation in terms of its roots. Thus, we can see continuity and perturbations through two different lenses: we can move around the zero set of the polynomial polynomially (meaning that what we get is still a polynomial) or we can perturb the coefficients. When one is sufficiently small so is the other. We will use this translation between perturbations at length during the proof.
\end{remark}

In a similar fashion, it will be important to consider the next remark about compactness and the space we have to look at.

\begin{remark}[Compactness viewed from two sides]\label{compactnessviews}
Taking a compact set in the space of RZ polynomials will amount to taking a bounded and closed set. We can again see this set in terms of the coefficients as an usual compact inside a metric space or we can see this properties through the lenses of the zero sets of the polynomials in this set. Looking at the zero sets, this compact set will have the form of a closed set of ovaloids that lie within a bounded distance from the initial ovaloids. All this, obviously, polynomially, which means that we stay the whole time within the range of what is expressible in polynomial terms.
\end{remark}

This encourages us to divide and partition the proof into different sections that will allow us to get deeper into the phenomena we are analyzing each time. Looking at the perturbations in the coefficients and the perturbations in the roots. This slow process will allow us to introduce some visualizations, examples and counterexamples that will make our journey around this landscape more satisfactory in the path of understanding the environment we are working in.

\begin{remark}[Slowly cooking a proof]
We will make clear how counterexamples found through other approaches have fed our research. We will also make an exercise of bookkeeping the transactions between the two points of views presented above: the forms of the perturbations of the coefficients and the roots.
\end{remark}

We hope that such treatment plays the role of making the reader aware of the many subtleties one has to deal with during our journey and, at the same time, of how much is yet to explore within these topics considered as areas of research just by themselves. Finally, all these explorations will come together to form the \text{obvious} proof of the GLC that we promise in the title.

\begin{remark}[Along the path]
In particular, along this path we will realize the importance of developing approximative methods and algorithms in order to approximate faster these rigidly convex sets. We will not expand in this direction here but this an obvious path to follow if one wants to achieve smaller representations, as we will not care here about how big these can eventually get. 
\end{remark}

When we deal with multivariate polynomials several monomials can have the highest degree, but there is just one monomial of degree $0$. This is why we prefer to center our attention on that monomial instead.

\begin{definicion}
We say that a polynomial is \textit{comonic} if its independent term is $1$.
\end{definicion}

In what follows, we are going to fix a RZ polynomial $p$ which is comonic and generic (and therefore smooth). We will require $p$ to have also compact real zero set $V$. We see that these conditions do not affect the generality of our arguments.

\begin{proposicion}\label{wlogcom}
If the GLC is true for all smooth RZ polynomials with compact real zero set, then it is true for all RZ polynomials.
\end{proposicion}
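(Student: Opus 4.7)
The plan is to reduce the GLC for an arbitrary comonic RZ polynomial $p$ to the GLC applied to approximating polynomials that are simultaneously smooth and have compact real zero set, and then transport the conclusion back via a limit argument. Three independent reductions are needed.

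First, compactness of $\rcs(p)$ is addressed at once by Remark \ref{compactrcs}. Second, smoothness is achieved by applying Theorem \ref{simpli}: the smooth RZ polynomials are open and dense among RZ polynomials of a fixed degree, so I pick a sequence of smooth comonic RZ polynomials $p_{k}\to p$ of the same degree, with $\rcs(p_{k})\to\rcs(p)$ in Hausdorff distance by Remark \ref{twoviews}. Third, compactness of the real zero set: $V(p_{k})$ is compact if and only if the top-degree form of $p_{k}$ is (semi-)definite on $\mathbb{R}^{n}\setminus\{0\}$, since otherwise roots of $p_{k}$ along any direction in which the leading form vanishes escape to infinity. I would perturb each $p_{k}$ within the RZ cone by adding a small interlacing multiple of a Nuij-lifted polynomial whose leading form is strictly positive definite (for instance, a Nuij-lift of a $(1-|x|^{2}/R^{2})^{d}$-style factor). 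Interlacing preserves RZ-ness under small convex combinations, so the perturbation $\tilde p_{k}$ remains smooth RZ, has compact real zero set, and lies close enough to $p_{k}$ that $\rcs(\tilde p_{k})$ stays close to $\rcs(p)$.

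Applying the hypothesis to each $\tilde p_{k}$ yields a cofactor $\tilde q_{k}$ and an MSLMP $\tilde M_{k}$ with $\tilde q_{k}\tilde p_{k}=\det(\tilde M_{k})$ and $\rcs(\tilde q_{k})\supseteq\rcs(\tilde p_{k})$. A compactness argument in a bounded neighborhood of $p$, exploiting the two-sided compactness view of Remark \ref{compactnessviews}, lets me extract a convergent subsequence $(\tilde q_{k},\tilde M_{k})\to(q,M)$; continuity of the determinant and of the $\rcs$ operator under coefficient perturbation then yields $qp=\det(M)$ with $\rcs(q)\supseteq\rcs(p)$, i.e., the GLC for $p$. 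The main obstacle is keeping the matrix sizes and cofactor degrees uniformly bounded along the subsequence: Theorem \ref{uncon} warns that such bounds can fail dramatically in general, so the argument must exploit the local structure of the smooth approximations near the fixed limit polynomial $p$ to ensure that within a small enough neighborhood the certificates $(\tilde q_{k},\tilde M_{k})$ stay of bounded complexity.
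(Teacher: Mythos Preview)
Your approach has a genuine gap, and you have already put your finger on it: the limiting step requires a uniform bound on the sizes of the matrices $\tilde M_{k}$ and the degrees of the cofactors $\tilde q_{k}$, and you do not supply one. The hypothesis only asserts \emph{existence} of a representation for each $\tilde p_{k}$; nothing ties the sizes together as $k$ varies. Without such a bound there is no ambient finite-dimensional space in which to extract a convergent subsequence, and the limit argument collapses. Your closing sentence (``the argument must exploit the local structure \dots to ensure \dots bounded complexity'') is a description of what is missing, not a proof of it; Theorem~\ref{uncon} is in fact evidence that any such uniform bound, if it exists at all, would be delicate.

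The paper sidesteps this difficulty entirely for the compactness reduction: rather than approximating, it passes to the projective closure, observes that the real zero set there is a union of nested ovaloids (via \cite[Theorem 5.2.a]{helton2007linear}), and then simply chooses a \emph{different} hyperplane at infinity, one that misses the outermost ovaloid. This is a projective linear change of coordinates, so a monic symmetric LMI representation of $\rcs(p')$ transports \emph{exactly} to one of $\rcs(p)$, with no limits and no size control needed. Only the residual smoothness and odd-degree issues are handled by a limit argument in the paper, and there the paper is admittedly as brief as you are. A smaller point: Remark~\ref{compactrcs} concerns compactness of $\rcs(p)$, not of the full real zero set $V(p)$; these are different conditions (all the outer ovaloids must close up affinely, not just the innermost one), so invoking that remark does not dispose of the compactness hypothesis you need.
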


\begin{proof}
Suppose $p\in\mathbb{R}[x_{1},\dots,x_{n}]$ of even degree $d=2k$ has non-compact real zero set. Then we can homogenize it to \begin{gather*}
  P(X_{0},\dots, X_{n})=X_{0}^{d}p(\frac{X_{1}}{X_{0}},\dots, \frac{X_{n}}{X_{0}})
\end{gather*} in order to consider its behaviour at infinity. Then $P$ is a hyperbolic polynomial. We identify as usual the $n$-dimensional real projective space $\mathbb{P}^{n}(\mathbb{R})$ with the union of $\mathbb{R}^{n}$ and the hyperplane at infinity given by $X_{0}=0$. In this case, the affine coordinates $x$ and the projective coordinates $X$ are related by $x_{i}=\frac{X_{i}}{X_{0}}$. In this environment, projective real algebraic hypersurface $P(X)=0$ in $\mathbb{P}^{n}(\mathbb{R})$ is the projective closure of the affine real algebraic hypersurface $p(x)=0$ in $\mathbb{R}^{n}$. As $d$ is even, \cite[Theorem 5.2.a]{helton2007linear} tells us that the the porjective hypersurface of real zeros is the disjoint union of $k$ ovaloids $W_{1},\dots,W_{k}$ with $W_{i}$ in the interior of $W_{i+1}$ for $i\in[k-1]$ and $x^{0}$ lying in the interior of $W_{1}$. Therefore the fact that $p$ has non-compact real zero set comes from choosing the hyperplane at infinity $X_{0}=0$ cutting through these ovaloids. Choosing a different hyperplane at infinity, we can build another RZ polynomial $p'$ with compact real zero set (it is enough if we now choose the hyperplane at infinity to not cut the outermost ovaloid $W_{k}$) and it is obvious that a spectrahedral representation for $\rcs(p')$ translates to a spectrahedral representation for $\rcs(p)$. If $p$ has odd degree, it can be perturbed to a RZ polynomial of even degree (as a polynomial of degree $d$ can be seen as a polynomial of degree $d+1$ with an extra branch of zeros at infinity which can then be perturbed adequately) and we proceed similarly. A limit argument covers now both the odd case and the non-smooth case, finishing the proof.
\end{proof}

Continuing with our conventions, we will fix that the degree of $p$ is $d$ and it has $n$ \textit{meaningful} variables. Thus $p$ is a monic generic smooth RZ element of $\mathbb{R}[\mathbf{x}]_{d}$ with $\mathbf{x}=(x_{1},\dots,x_{n}).$ As we want it to have a compact RZ set, it will moreover need to have even degree $d=2k.$ In general, because of this, the number of variables will not play a very deep role while the degree will in fact be a bit more involved in our arguments. We will explain the terms used here further in the next mandatory section. We also make clear that, through these conditions, we are not really losing any generality, just gaining clarity.

\begin{remark}[Without loss of generality]
The polynomial $p$ can always be made \textbf{comonic} by a simple division as long as $p(\mathbf{0})\neq0$, which is something that every RZ polynomials verifies. We want that $p$ has \textbf{compact} real zero set because we will use the compactness of these curves in order to build determinantal representations. But this is not a real problem, as we can always compactify our environment passing to the projective setting and speaking about hyperbolic polynomials. Everything can be translated into that environment in an immediate way, as we saw in Proposition \ref{wlogcom}. If $p$ is not \textbf{smooth} or not having \textbf{even} degree, there will always be a smooth even degree perturbation close enough to it and this small perturbation will fall so close that any argument will carry over by continuity. The same applies to \textbf{genericity}.
\end{remark}

Anyway, at the end we will see how to modify the arguments in order to cover these limit cases when they appear. The message is that our assumptions are not problematic, just clarifying. They serve the only purpose of increasing clarity along our path here and easing our ability to visualize our arguments.

\section{Preliminaries}

Real zero polynomials generalize real rooted polynomials to the multivariate setting. They do this requiring real-rootedness of the restrictions along all the lines passing through the origin. Another way of defining real rooted polynomial is through how they split completely. This applies to RZ polynomials.

\begin{definicion}[Splitting definition of RZ polynomials]
Let $p\in\mathbb{R}[\mathbf{x}]$ be a polynomial with real coefficients. We say that $p$ is \textit{real zero} if, for each $a\in\mathbb{R}^{n}$, we have that the univariate polynomial $p(ta)$ splits in $\mathbb{R}$ in terms of linear factors. That means that we can write $p(ta)=c_{a}(t-r_{a,1})\cdots(t-r_{a,d})$ for $d$ the degree of $p(ta)$ in the variable $t$.
\end{definicion}

When, for some direction $a$, the degree changes it can only drop down the degree of $p$. This just means that some root has \textit{jumped} to infinity and therefore it disappears from our affine plane. This roots would be conserved in the projective setting using hyperbolic polynomials. However, as we will consider only polynomials with compact rigidly convex sets, this will not be a problem for us. This splitting also tells us something well-known in the case of real rooted polynomials.

\begin{teorema}[Splitting determines real rooted polynomials]
Let $p\in\mathbb{R}[x]$ be a (co)monic real rooted polynomial. Then its real roots totally determine $p$.
\end{teorema}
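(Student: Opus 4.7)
The plan is to leverage the factorization theorem for univariate polynomials over $\mathbb{R}$, combined with the (co)monicity normalization to pin down the leading constant. First I would write $p(x) = c\prod_{i=1}^{d}(x - r_i)$, which is possible because $p$ is assumed real rooted of degree $d$, with the $r_i$ enumerating the real roots counted with multiplicity. By unique factorization in $\mathbb{R}[x]$, this decomposition is unique up to the scalar $c$, so the multiset of roots $\{r_i\}$ determines the right-hand side except for $c$.

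Next I would use the normalization to fix $c$. If $p$ is monic, comparing leading coefficients immediately yields $c = 1$. If instead $p$ is comonic, then $p(0) = 1$, and since $p(0) = c\prod_{i=1}^{d}(-r_i)$ we obtain $c = (-1)^{d}/\prod_{i=1}^{d} r_i$; note here that no $r_i$ is zero, because $p(0) = 1$ rules out the origin as a root, so the denominator makes sense. In either case the multiset of real roots, together with the normalization prescribed by the hypothesis, recovers $c$ and hence recovers $p$ itself.

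The only subtlety worth flagging is that one has to know the degree $d$ in advance, or equivalently that the roots be listed with their correct multiplicities, since a bare list of distinct roots would only determine the radical of $p$. But $p$ being real rooted of degree $d$ means exactly that the multiset of real roots has cardinality $d$, so this data is implicitly present and there is no genuine obstacle beyond this bookkeeping. No deep tool is required; the statement really is just the combination of the real factorization of a real-rooted polynomial with a normalization that kills the last remaining scalar degree of freedom.
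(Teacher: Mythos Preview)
Your argument is correct and is precisely the natural one: factor the real-rooted polynomial completely over $\mathbb{R}$, then use (co)monicity to pin down the scalar. The paper does not actually supply a proof of this statement at all; it records the theorem as well-known and moves on, so your write-up simply makes explicit the obvious reasoning the paper leaves implicit.
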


This means that, if we know all the real roots of a real rooted polynomial, then we can totally reconstruct the polynomial (up to a constant). Notice that this is not true if some roots are not real. That is, if there are non-real roots, the real roots do not completely determine the polynomial.

\begin{ejemplo}
Consider the polynomials $p=(x-1)(x-i)(x+i), q=(x-1)(x-2i)(x+2i)\in\mathbb{R}[x]$. They have the same real roots but differ in their complex roots and this provokes that they are different.
\end{ejemplo}

This a very easy example. The same thing happens in more variables. If we want to determine a multivariate polynomial by their real roots, we need that all their roots through any line are real so that we can reconstruct the value of the polynomial line by line adequately through all the real lines. In fact, we have the following result.

\begin{proposicion}\label{comonicreal}
If two comonic polynomials $p,q\in\mathbb{R}[\mathbf{x}]$ of degree $d$ have $d$ real roots along (almost) each line through the origin and they coincide, then $p=q$.
\end{proposicion}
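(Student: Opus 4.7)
The plan is to reduce the multivariate statement to the univariate case line by line, using that a comonic real-rooted univariate polynomial of degree $d$ is rigid once we know where its roots sit.

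First I would recall the univariate baseline. If $f\in\mathbb{R}[t]$ is a polynomial of degree exactly $d$ that splits over $\mathbb{R}$, then $f(t)=c\prod_{i=1}^{d}(t-r_{i})$. Evaluating at $t=0$ we get $f(0)=c(-1)^{d}r_{1}\cdots r_{d}$. If $f$ is comonic, i.e.\ $f(0)=1$, and all $r_{i}\neq 0$ (which is automatic because $f(0)\neq 0$), then the leading coefficient $c$ is uniquely determined by the multiset of roots via $c=(-1)^{d}/(r_{1}\cdots r_{d})$. Hence two comonic, real-rooted, degree $d$ univariate polynomials that share their multiset of real roots are literally the same polynomial.

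Next I would apply this line by line. Fix a direction $a\in\mathbb{R}^{n}$ for which both $p(ta)$ and $q(ta)$ have degree $d$ in $t$ (this is a generic condition, as the leading coefficient in $t$ is a polynomial in $a$, so failure occurs only on a proper algebraic subvariety of directions). By hypothesis both univariate restrictions split with $d$ real roots, are comonic at $t=0$ (since $p(\mathbf{0})=q(\mathbf{0})=1$), and, thanks to the coincidence hypothesis, share their multisets of real roots along this line. The univariate baseline then gives $p(ta)=q(ta)$ as polynomials in $t$, hence in particular pointwise on the whole line $\mathbb{R}a$.

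Finally I would pass from lines to $\mathbb{R}^{n}$. Let $U\subseteq\mathbb{R}^{n}$ be the set of directions where the above applies; by the remark above $U$ contains the complement of a proper algebraic set and is therefore dense in $\mathbb{R}^{n}$. The preceding step shows $(p-q)(x)=0$ on the union $\bigcup_{a\in U}\mathbb{R}a$, which is still dense in $\mathbb{R}^{n}$. Since $p-q$ is a polynomial, hence continuous, this forces $p-q\equiv 0$ on $\mathbb{R}^{n}$, and a polynomial vanishing on $\mathbb{R}^{n}$ is the zero polynomial; thus $p=q$.

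The only mildly subtle point, and the one I would flag as the main obstacle to fully rigorous phrasing, is managing the ``almost each line'' clause together with possible drops in the degree of the restrictions: we must ensure that the set of directions where $p(ta)$ and $q(ta)$ both remain of genuine degree $d$ is still a dense open subset, so that the final density argument is legitimate. This is immediate because the vanishing of the degree $d$ coefficient of $p(ta)$ in $t$, and likewise for $q(ta)$, is a polynomial condition on $a$; the union of these two proper algebraic subsets still has empty interior, leaving a dense open set of good directions from which the conclusion propagates by continuity.
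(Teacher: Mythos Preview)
Your proposal is correct and follows essentially the same approach as the paper's own proof: establish equality of the univariate restrictions on (almost) every line through the origin, then use density plus continuity to conclude $p=q$ on all of $\mathbb{R}^{n}$, hence formally. Your version is simply a more detailed expansion of the paper's very terse argument, in particular making explicit why comonicity pins down the leading coefficient of each restriction and why the degree-drop directions form a negligible set.
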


\begin{proof}
Equality of the roots on (almost) each real line implies equality of the restriction to such lines. As this happens along (almost) all the lines this, in turn, implies equality along all of $\mathbb{R}^{n}$ (by continuity). Equality on $\mathbb{R}^{n}$ implies forma equality of polynomials. This is well known. See, for example, \cite{4942793}.
\end{proof}

Notice that the same is not true if there are complex roots. Easy examples show this.

\begin{ejemplo}
The comonic polynomials obtained after normalizing $p=(1-x_{1}-x_{2})(x_{1}-i)(x_{1}+i),q=(1-x_{1}-x_{2})(x_{1}-x_{2}-i)(x_{1}-x_{2}+i)\in\mathbb{R}[\mathbf{x}]$ are different but have the same real roots.
\end{ejemplo}

Thus, when we consider just polynomial with all their roots along real lines real (RZ polynomials), we do not even have to care about what happens outside of the reals. This means that, for these polynomials, we can restrict the reach of all the tools we need to those having a nice behaviour in the reals, nothing beyond that matters in our journey of understanding them.

\begin{remark}[Real is all you need]
For RZ polynomials, understanding their behaviour over the reals amounts to actually understanding the polynomial itself formally.
\end{remark}

Notice that we deal with all polynomials without taking care of them being reducible or irreducible. Some results can be strengthened for irreducible polynomials. However, we do not want to look at irreducible separately because the GLC forces us to deal with reducible polynomials anyways due to the emergence of the necessary cofactor we require in order to be able to overcome dimensionality restrictions of the MSLMPR (monic symmetric linear matrix polynomial representation) we search for.

\begin{remark}[All polynomials in sight]
Even if we chose to set the initial polynomial $p$ to be irreducible, there is a forced cofactor $q$ implied in the GLC that could be (highly) reducible itself and, anyway, the product obtained $qp$ being representable as a determinant of a MSLMP is going to be reducible itself being the product of the original polynomial and the sought cofactor.
\end{remark}

This focus on real roots has a direct consequence on our notions of \textit{distance} between polynomials. But this requires a treatment by itself that forces us to leave this preliminar section

\section{Measuring closeness of polynomials}\label{measuring}

We made an effort in Proposition \ref{wlogcom} to stay within the affine setting all the time. This effort could have taken a different form if we acted in a different way on this section looking through the projective lenses. However, we prefer to stay within the affine setting so that the next measures make sense in a more straightforward and natural way.

Notice that we avoided using the term \textit{distance} in the title above. This is so because we are not going to require these \textit{measures} to be distances in the usual mathematical sense of that word. We only require that they behave well locally in a way that allows us to build a local sense of closeness \textit{along sufficiently many lines}. That is, we just need a tool that let us recover some notion of closeness from the Euclidean topology in the set of polynomial when we look at it by their roots and not by their coefficients. This notion of closeness will turn out to be the same by simple continuity arguments related to the roots.

\begin{definicion}[Naive polynomial distance]\label{naivepoldis}
Let $p,q\in\mathbb{R}[\mathbf{x}]$ be polynomials and denote $\coeff(m,p)$ the coefficient of the monomial $m$ in $p$. Then, for $u\geq1$, we define the \textit{$u$ coefficient distance} between $p$ and $q$ as $$d_{c}(p,q)=\left(\sum_{m\in M}|\coeff(m,p)-\coeff(m,q)|^{u}\right)^{1/u}$$ where $M$ is the set of all monic monomials in the variables $\mathbf{x}$. 
\end{definicion}

In this way, the set of polynomials behaves like the vector space $c_{00}$ of eventually-zero real valued sequences equipped with the metric given by the $u$-norm. We can \textit{twist} things a bit in oder to build a metric that allows us to visualize things better.

\begin{definicion}[Twisted polynomial distance]\label{twistedpoldis}
Let $p,q\in\mathbb{R}[\mathbf{x}]$ be RZ polynomials and denote $\rt_{m}(a,p)$ the $m$-th root in the set of roots of the polynomial $p$ along the semiline positively spanned by $a$, that is, along the semiline $\mathbb{R}_{\geq 0}a=\{ta\mid t\in\mathbb{R}_{\geq 0}\}$ where the order goes from closest to farthest to the origin $\mathbf{0}.$ Then, for $u\geq1$, we define the \textit{$u$ root distance} between $p$ and $q$ as $$d_{r}(p,q)=\int_{a\in\mathbb{S}^{n-1}}\sum_{i}|\rt_{i}(a,p)-\rt_{i}(a,q)|^{u},$$ where the sum goes until $i$ provokes one of the sets of roots to run out of elements.
\end{definicion}

If our two polynomials $p$ and $q$ have compact real zero set (which is the generic case we decide to deal with here) then this measure considers the corresponding ovaloids of zeros $P=\{P_{1},\dots,P_{\deg(p)}\}$ and $Q=\{Q_{1},\dots,Q_{\deg(q)}\}$ and, until $d=\min\{\deg(p),\deg(q)\}$, it computes basically the total of the volumes of the differences $\overline{Q}_{i}\smallsetminus\overline{P}_{i}$, where $\overline{O}_{i}$ for the ovaloid $O_{i}$ means the connected component of the origin in the set $\mathbb{R}^{n}\smallsetminus O_{i}$.

\begin{remark}\label{heltonmeasure}
Looking at the different ovaloids of the RZ polynomials $p$ and $q$ in the sense described in \cite{helton2007linear}, this measure of distance is actually a measurement of how close are all the zeros of $p$ from a zero of $q$ along each line. This extends naturally to a measure of closeness in the whole space.
\end{remark} 

The measure moreover is able to compute this sense of closeness even when the two polynomials have different degree and therefore vary in the number of roots along each line. This is actually a strength of this way of measuring things contrary to the naive one because it measures closeness not towards the original polynomials, which, for us, might be actually unimportant, but to possible multiples of these polynomials. That is, this measure is a measure of closeness towards the possible multiples that interest us and that is able to disregard multiples that we do not in principle care about in order to detect our required notion of closeness. This will be important because of the forced appearance of cofactors in our argument that we do not need to control completely beyond knowing that they are there for algebraic reasons and that they are \textit{far enough} from our rigidly convex set of interest.

\begin{remark}[A better measure of closeness]
By disregarding, non-matching roots we measure closeness of a polynomial $p$ with a close factor of $q$ instead of looking at the whole $q$. This measure therefore gets automatically rid of disturbances in the notion of closeness introduced by far away factors within $q$. This allow us to concentrate on the topological behaviour detaching it from the algebraic part that will still play a role in the process of having a determinantal representation.
\end{remark}

Observe that there is an inconvenience in the fact that we count each line two times in the Definition \ref{twistedpoldis}. This can be easily solved taking just half a sphere. Denote $\mathbb{S}^{n-1}_{+}:=\{a\in\mathbb{S}^{n-1}\mid a=(\mathbf{0},a_{i},\dots a_{n}) \mbox{ with } a_{i}>0\}\subseteq\mathbb{S}^{n-1}\subseteq\mathbb{R}^{n}.$ Thus we only take half the lines vectors in $\mathbb{S}^{n-1}$ and span each line just once. Thus we can define instead the measure as follows.

\begin{definicion}[Half twisted polynomial distance]\label{halftwis}
  Let $p,q\in\mathbb{R}[\mathbf{x}]$ be RZ polynomials and denote $\rt_{m}(a,p)$ the $m$-th root in the set of roots of the polynomial $p$ along the line spanned by $a$, that is, along the line $\mathbb{R}a=\{ta\mid t\in\mathbb{R}\}$ where the order is given by the order of the corresponding values of $t$ for each point so that the order depends on the parametrization. Then, for $u\geq1$, we define the \textit{$u$ half root distance} between $p$ and $q$ as $$d_{h}(p,q)=\int_{a\in\mathbb{S}^{n-1}_{+}}\sum_{i}|\rt_{i}(a,p)-\rt_{i}(a,q)|^{u},$$ where the sum goes until $i$ provokes one of the sets of roots to run out of elements.
\end{definicion}

However, this does not actually matter in the end, because this will only change in the end coefficients of closeness and we are not looking at these. In fact, in the practice, by trivial continuity arguments, we only need to check this closeness along finitely many close lines at a time.

\begin{proposicion}[Measuring finitely]
Let $p$ and $q$ be smooth RZ polynomials with compact zero set. For all $\epsilon>0$ there exist a choice of lines through the origin $M$ and a partition $\{S_{l}\mid l\in M\}$ of $S^{n-1}$ into measurable sets with $l\in S_{l}$ such that $|d_{r}(p,q)-\sum_{l\in M}\vol(S_{l})\sum_{i} |\rt_{i}(a,p)-\rt_{i}(a,q)|^{u}|<\epsilon,$ where for a measurable subset $S\subseteq S^{n-1}$ the quantity $\vol(S)$ is its usual Lebesgue volume inside this space.
\end{proposicion}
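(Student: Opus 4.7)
The plan is to recognize the desired statement as a Riemann-sum approximation of the integral $d_r(p,q)$ over the compact sphere $\mathbb{S}^{n-1}$. I would carry this out in three stages: control the integrand uniformly, establish its continuity off a negligible set, and then exploit uniform continuity together with a Voronoi partition generated by a sufficiently fine net on the sphere.

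First I would set $g(a) := \sum_i |\rt_i(a,p) - \rt_i(a,q)|^u$ and collect two a priori bounds. Because $p$ and $q$ have compact real zero sets, their homogeneous top-degree components must vanish nowhere on $\mathbb{R}^n \smallsetminus \{\mathbf{0}\}$ (otherwise a branch of real roots would escape to infinity in that direction); consequently $p(ta)$ and $q(ta)$ retain their full degrees for every $a \in \mathbb{S}^{n-1}$, and their roots stay inside a fixed ball $[0,R]$ uniformly in $a$. It follows that $g(a) \le C$ for some constant $C$, and that the unordered multisets of roots of $p(ta)$ and $q(ta)$ depend continuously on $a$. Let $B \subset \mathbb{S}^{n-1}$ be the set of directions where two roots of $p(ta)$ or of $q(ta)$ coincide; this is the common zero locus of the discriminants of $p(ta)$ and $q(ta)$ as polynomials in $t$ with coefficients polynomial in $a$, hence a closed real-algebraic subset of $\mathbb{S}^{n-1}$. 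Off $B$ the orderings $a \mapsto \rt_i(a,p)$ and $a \mapsto \rt_i(a,q)$ used in Definition \ref{twistedpoldis} are continuous, and thus so is $g$.

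Given $\epsilon > 0$, I would thicken $B$ to an open neighbourhood $U$ of measure less than $\epsilon/(4C)$, apply Heine--Cantor on the compact complement $\mathbb{S}^{n-1}\smallsetminus U$ to choose $\delta > 0$ such that $|g(a) - g(b)| < \epsilon/(2\vol(\mathbb{S}^{n-1}))$ whenever $a, b \in \mathbb{S}^{n-1} \smallsetminus U$ and $\dist(a,b) < \delta$, and pick a finite $(\delta/2)$-net $M \subset \mathbb{S}^{n-1}\smallsetminus U$. Defining $S_l$ to be the Voronoi cell of $l \in M$ within $\mathbb{S}^{n-1}$ (ties broken measurably) yields a measurable partition of the whole sphere with $l \in S_l$ and every $a \in S_l \smallsetminus U$ within distance $\delta/2$ of $l$. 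Splitting each $S_l$ as $(S_l \smallsetminus U) \cup (S_l \cap U)$, the contributions to $|d_r(p,q) - \sum_l \vol(S_l)\, g(l)|$ are bounded separately: the first family by $\epsilon/2$ via uniform continuity, and the second by $2C\vol(U) < \epsilon/2$ via the trivial estimate $|g(a) - g(l)| \le 2C$, yielding the claimed inequality.

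The main obstacle I foresee is the continuity-off-a-null-set claim, namely that the discriminant locus $B$ above has measure zero in $\mathbb{S}^{n-1}$. This reduces to showing that the discriminants of $p(ta)$ and $q(ta)$, viewed as polynomials in $a$, do not vanish identically on the sphere, because any proper real-algebraic subset of $\mathbb{S}^{n-1}$ has codimension at least one and thus Lebesgue measure zero. For generic smooth $p$ and $q$ this is automatic since a generic line through the origin is transverse to each ovaloid and so meets it in simple roots; the remaining degenerate cases can be handled by approximating with generic perturbations and invoking dominated convergence through the uniform bound $g \le C$. Once this geometric fact is in hand, the rest is the standard Riemann-sum estimate sketched above.
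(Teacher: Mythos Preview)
Your argument is correct, and it is far more detailed than the paper's own proof, which reads in its entirety: ``Evident by continuity.'' Both rest on the same idea---the integrand is (essentially) continuous on the compact sphere, so a Riemann-sum approximation works---but you actually carry out the estimate.

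One remark: your detour through the discriminant locus $B$ is more cautious than necessary. Under the stated hypotheses (smooth RZ with compact real zero set), the leading homogeneous part of $p$ never vanishes on $\mathbb{S}^{n-1}$, so $p(ta)$ has fixed degree $d$ in $t$ for every $a$; since $p(\mathbf{0})\neq 0$, no root crosses $t=0$, so the number of roots on the positive semiline is locally (hence globally) constant. Now the ordered real roots of a real-rooted polynomial depend continuously on its coefficients even across the discriminant locus (the $k$-th smallest root is a continuous function of the unordered multiset of roots), so each $a\mapsto \rt_i(a,p)$ is already continuous on all of $\mathbb{S}^{n-1}$, and likewise for $q$. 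Hence $g$ is continuous on the compact sphere, uniformly continuous by Heine--Cantor, and your Voronoi/net construction goes through directly without ever excising a neighbourhood $U$. This is presumably what the paper has in mind by ``evident by continuity''; your version trades that implicit use of order-statistic continuity for an explicit measure-zero excision, which is a perfectly valid alternative and arguably more robust if one were to relax the smoothness or compactness assumptions.
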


\begin{proof}
Evident by continuity.
\end{proof}

Thus if we want to ensure $p$ being close enough to a possible factor of $q$, we can do this looking at what happens in a finite number of lines through the origin. We choose these distances because they are the most natural ones. It would have also being natural to consider just the distances obtained after substituting $\sum$ by $\max$ and $\int$ by $\sup$. Notice that, in any of the cases, the newly introduced measures here tell us more algebraic information about the decomposition of the polynomial than the naive measure by a distance. And this happens precisely because $d_{r}$ is not a distance in the usual terms. This is what the following theorem tells us.

\begin{teorema}[Distance and multiples for RZ polynomials]\label{distmult}
Let $p,q\in\mathbb{R}[\mathbf{x}]$ be RZ polynomials. Then \begin{enumerate}\item $d_{c}(p,q)=0$ iff $p=q$.\\\item $d_{r}(p,q)=0$ iff $p|q$ or $q|p$.\\\item $d_{h}(p,q)=0$ iff $p|q$ or $q|p$.\end{enumerate}
\end{teorema}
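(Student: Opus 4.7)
Part (1) is immediate: $d_{c}(p,q)$ is literally the $\ell^{u}$-norm applied to the (finitely supported) vector of coefficients of $p-q$, and the axiom that a norm separates points gives $d_{c}(p,q)=0 \iff p=q$. The RZ hypothesis plays no role in this item.

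For parts (2) and (3), the two measures encode the same line-by-line data: integrating over $\mathbb{S}^{n-1}$ covers each line through the origin twice in positive-semiline form, while integrating over $\mathbb{S}^{n-1}_{+}$ covers it once in line-parametrized form, so (2) and (3) are logically equivalent and I will focus on (2). In the forward direction, the integrand $\sum_{i}|\rt_{i}(a,p)-\rt_{i}(a,q)|^{u}$ is a nonnegative continuous function of $a$ on the open dense set where the restricted roots depend continuously on $a$ (the complement of the discriminant locus, where multiplicities can jump). Nonnegativity plus vanishing integral force the integrand to vanish on that open dense set, so for every generic $a$ the sorted positive-semiline roots of $p(ta)$ and $q(ta)$ agree pairwise up to the smaller count; integrating also over $-a$ extends the matching to the negative semiline. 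Assuming WLOG $\deg p \leq \deg q$, the real roots of $p(ta)$ are then exactly the innermost $\deg p$ real roots of $q(ta)$ on each generic line, hence $p(ta)\mid q(ta)$ in $\mathbb{R}[t]$. The divisibility analogue of Proposition~\ref{comonicreal} (line-by-line divisibility on a Zariski-dense family of real lines forces divisibility in $\mathbb{R}[\mathbf{x}]$, via vanishing of $q$ on the real hypersurface cut out by $p$ combined with continuity) then lifts this to $p\mid q$ globally.

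The reverse direction is where the real work lies and where I expect the main obstacle. From $p\mid q$ alone, writing $q=pr$ with $r\in\mathbb{R}[\mathbf{x}]$, one cannot immediately conclude that the sorted-by-distance first $\deg p$ positive-semiline roots of $q(ta)$ coincide with those of $p(ta)$, because a cofactor $r$ whose ovaloids nest strictly inside those of $p$ would contribute zeros closer to the origin than $p$'s and thus capture the lower indices. My plan is to exploit the nested-ovaloid structure of compact RZ polynomials: the $\deg p+\deg r$ ovaloids of the RZ product $q=pr$ form a globally nested family around the origin, and their innermost $\deg p$ members coincide with the ovaloids of $p$ precisely under the compatibility $\rcs(r)\supseteq\rcs(p)$, which is exactly the situation relevant to the GLC. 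Under this compatibility, the sorted positive-semiline roots of $q(ta)$ along each generic direction begin with the crossings of $p$'s ovaloids, the integrand vanishes identically, and the parallel analysis for line-parametrized indexing gives the same conclusion for $d_{h}$. The principal subtlety is therefore reconciling pure algebraic divisibility with the metric indexing of roots: either one restricts the iff to ovaloid-compatible factorizations, or one re-indexes $\rt_{i}$ so that matching subscripts track matching ovaloids rather than absolute Euclidean distance from the origin.
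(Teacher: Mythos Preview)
Your argument is already substantially more detailed than the paper's, which reads in its entirety: ``The proof is again immediate by continuity and Theorem~\ref{comonicreal}.'' Your treatment of item~(1) and of the forward implication in items~(2)--(3) is exactly the content the paper is gesturing at, just made explicit.

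The issue you flag in the reverse direction is not a mere subtlety to be smoothed over: as the theorem is literally stated, that implication is \emph{false}. Take $n=1$, $p=1-x^{2}$ and $q=(1-x^{2})(1-4x^{2})$. Both are comonic RZ polynomials and $p\mid q$. Along the positive semiline the single root of $p$ is $1$, while the closest root of $q$ is $1/2$, so $|\rt_{1}(+1,p)-\rt_{1}(+1,q)|=1/2$ and hence $d_{r}(p,q)>0$; the same computation along the full line gives $d_{h}(p,q)>0$. Your diagnosis is therefore correct: the sorted-by-distance indexing only aligns with divisibility when the cofactor's ovaloids sit \emph{outside} those of the smaller-degree factor, i.e.\ under the $\rcs$-compatibility $\rcs(p)\subseteq\rcs(r)$ that the paper cares about anyway.

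The paper does not notice this, but it also never relies on the reverse implication: the measures $d_{r}$ and $d_{h}$ are introduced as motivation and are immediately superseded by the radial Hausdorff--Fr\'echet distance of Definition~\ref{rhfd}, for which the analogous Theorem~\ref{distmultall} is the one actually invoked downstream. So the defect you have located is in a statement that is not load-bearing for the main argument, and your proposed repair (restrict to ovaloid-compatible factorizations, or re-index $\rt_{i}$ by ovaloid rather than by raw distance) is the right way to salvage the claim.
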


\begin{proof}
The proof is again immediate by continuity and Theorem \ref{comonicreal}.
\end{proof}

Observe that Definitions \ref{twistedpoldis} and \ref{halftwis} involve RZ polynomials. This is so because otherwise we cannot order the roots. But also because non-RZ polynomials will have, in terms of roots, an insurmountable measure of distance in our sense here. The following example shows us why we should expect problems when the polynomials are not RZ.

\begin{ejemplo}[The TV-screen]
The polynomial $p=1-x_{1}^{4}-x_{2}^{4}$ is the classical example of a polynomial having a convex real zero set forming an ovaloid around the origin but not being a RZ polynomial. This is a counterexamples for the tentative of claiming that only RZ polynomials have this innermost ovaloid convex. We would also like to be able to measure distance for this kind of polynomials because they play the role of representing an obstacle to our approach that only can be discarded through this distance.
\end{ejemplo}

We can solve this problem thinking ahead. In particular, we should think about a multivariate polynomial as entirely given in terms of its behaviour around its lines through the origin no matter what the roots are. The difference with the approach above is that we now take our line complex and study what happens in each line without fearing complex roots. The measure that we produce in response will in fact be the one we pursue here. For this, we will first improve Propostion \ref{comonicreal}.

\begin{proposicion}\label{comoniccomplex}
If two comonic polynomials $p,q\in\mathbb{R}[\mathbf{x}]$ have coinciding roots along each line through the origin, then $p=q$.
\end{proposicion}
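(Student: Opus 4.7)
The plan is to reduce the multivariate claim to the univariate case by restricting to lines through the origin, exactly as in the proof of Proposition \ref{comonicreal}, only now without needing to know that the restrictions to lines are real-rooted. First I would fix an arbitrary direction $a\in\mathbb{R}^{n}$ and consider the univariate polynomials $p(ta),q(ta)\in\mathbb{R}[t]$. By hypothesis these two polynomials share the same multiset of complex roots along $\mathbb{R}a$; in particular they have the same number of roots counted with multiplicity, hence the same degree in $t$. A univariate polynomial over $\mathbb{C}$ factors as a constant times a product of linear factors given by its roots, so $p(ta)$ and $q(ta)$ agree up to a scalar multiplicative constant $c_{a}\in\mathbb{C}^{\times}$.

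Next I would pin that constant down using the comonic hypothesis. Since $p(\mathbf{0})=q(\mathbf{0})=1$, we have $p(ta)|_{t=0}=1=q(ta)|_{t=0}$, so $c_{a}=1$ and consequently $p(ta)=q(ta)$ as elements of $\mathbb{R}[t]$. Evaluating at $t=1$ gives $p(a)=q(a)$, and since $a\in\mathbb{R}^{n}$ was arbitrary, the polynomials $p$ and $q$ coincide as functions on the whole of $\mathbb{R}^{n}$. The standard fact that a real polynomial is determined by its values on an infinite Zariski-dense set (here the whole of $\mathbb{R}^{n}$, which is Zariski-dense in $\mathbb{C}^{n}$) then yields the formal equality $p=q$ in $\mathbb{R}[\mathbf{x}]$, as cited already in the proof of Proposition \ref{comonicreal}.

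The only delicate point that I would be careful about is the interpretation of the hypothesis. The phrase \emph{coinciding roots along each line} has to be read with multiplicities, since otherwise simple counterexamples (like $(1-x)^{2}$ versus $1-x$, suitably comonicized) would break the claim; in particular the equality of multisets is what guarantees that $p(ta)$ and $q(ta)$ have the same degree in $t$ and thus differ by a scalar only, not by a nontrivial polynomial factor. A secondary subtlety is that for certain \emph{exceptional} directions $a$ the degree of $p(ta)$ in $t$ can drop below $\deg p$ (roots escaping to infinity in projective terms, as in the discussion preceding Proposition \ref{wlogcom}); but since this drop happens simultaneously for $p$ and $q$ by the shared-roots hypothesis, the same scalar argument still applies line by line.

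I expect no real obstacle beyond those bookkeeping remarks: the proof is essentially the same three-line argument as in the real-rooted case, with $\mathbb{C}$ replacing $\mathbb{R}$ in the factorization step and the comonic normalization playing the role that monicity played in the univariate fundamental theorem of algebra. The generalization is genuinely cost-free precisely because we only use existence of a complete factorization over an algebraically closed field, not the location of the roots, which is exactly what lets this statement cover the non-RZ cases (such as the TV-screen example) that were out of reach of Proposition \ref{comonicreal}.
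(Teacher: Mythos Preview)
Your proposal is correct and follows essentially the same route as the paper's own proof: restrict to each real line through the origin, use the shared complex roots (with multiplicity) together with comonicity to force $p(ta)=q(ta)$ for every $a$, and then conclude formal equality from agreement on all of $\mathbb{R}^{n}$. Your added remarks on multiplicities and degree drops are useful bookkeeping but do not change the argument, which is the same three-step reduction the paper sketches.
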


\begin{proof}
Now we do not only look at the real roots. We look at all the roots through each line. These roots will again completely determine each line restriction along all real lines. This information completely determines the behaviour of the polynomial in $\mathbb{R}^{n}$ and this behaviour in fact completely determines the polynomial formally (and thus in all of $\mathbb{C}^{n}$).
\end{proof}

Now we want to use the distance in each line to define our global distance. However, now we do not have a way to order the roots, so we have to resort directly to the metric given by the absolute value in the corresponding complex line in order to establish the behaviour in each line.

\begin{definicion}[Radial Hausdorff-Fréchet distance]\label{rhfd}
Let $p,q\in\mathbb{R}[\mathbf{x}]$ be polynomials and denote $\rt(a,p)$ the set of roots of the polynomial $p$ along the complex line spanned by $a\in\mathbb{R}^{n}$, that is, $\mathbb{C}a$. Choose the one polynomial with the lowest degree, say $p$. Then, for $e$ a distance in the complex plane, we define the \textit{$e$-radial distance} between $p$ and $q$ as $$d_{t}(p,q)=\int_{a\in\mathbb{S}^{n-1}_{+}}\sum_{b\in\rt(a,p)}e(b,\rt(a,q)).$$
\end{definicion}

The name comes from three main origins that inspire this measure. More can be read in \cite{pompeiu1905continuite}, \cite{frechet1906quelques} and \cite{hausdorff1978grundzuge}. The radial part is clear in the sense that it measures the distance between these polynomials along the radii provided by each line through the origin in $\mathbb{R}^{n}$.

\begin{teorema}[Distance and multiples for all polynomials]\label{distmultall}
Let $p,q\in\mathbb{R}[\mathbf{x}]$ be polynomials. Then $d_{t}(p,q)=0$ iff $p|q$ or $q|p$.
\end{teorema}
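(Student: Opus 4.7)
\emph{Reverse direction.} The case $p\mid q$ is direct: write $q=pr$ for some $r\in\mathbb{R}[\mathbf{x}]$, so $q(ta)=p(ta)r(ta)$ for every $a\in\mathbb{S}^{n-1}_+$, which means every element of $\rt(a,p)$ lies in $\rt(a,q)$. Therefore $e(b,\rt(a,q))=0$ for all $b\in\rt(a,p)$, the integrand of Definition \ref{rhfd} vanishes identically, and $d_t(p,q)=0$. The case $q\mid p$ is symmetric once we swap which polynomial is the lower-degree one taken in Definition \ref{rhfd}.

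\emph{Forward direction.} Suppose $d_t(p,q)=0$, with $\deg p\le \deg q$ by the convention of Definition \ref{rhfd}. Since the integrand is non-negative and has vanishing integral, it is zero for a.e.\ $a\in\mathbb{S}^{n-1}_+$. At each such $a$, every $b\in\rt(a,p)$ satisfies $e(b,\rt(a,q))=0$, so $b\in\rt(a,q)$. Reading $\rt(a,\cdot)$ as a multiset of roots so that the multiplicities inherited from the univariate factorizations of $p(ta)$ and $q(ta)$ are faithfully recorded (the same convention needed for Proposition \ref{comoniccomplex} to give Theorem \ref{distmult}), this upgrades to $p(ta)\mid q(ta)$ in $\mathbb{C}[t]$ for a.e.\ direction $a$.

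\emph{Passing to global divisibility.} Now use that $p$ is comonic (the paper-wide convention) so that $1/p\in\mathbb{C}[[\mathbf{x}]]$ exists, and expand $F(\mathbf{x})=q(\mathbf{x})/p(\mathbf{x})=\sum_{\alpha}c_\alpha\mathbf{x}^\alpha$ as a formal power series. Substituting $\mathbf{x}=ta$ gives $F(ta)=\sum_{k\ge 0}h_k(a)\,t^k$, where $h_k(a)=\sum_{|\alpha|=k}c_\alpha a^\alpha$ is homogeneous in $a$ of degree $k$. Since $F(ta)=q(ta)/p(ta)$ is a polynomial in $t$ of degree at most $\deg q-\deg p$ for a.e.\ $a$, each $h_k$ with $k>\deg q-\deg p$ vanishes on a positive-measure subset of $\mathbb{S}^{n-1}_+$. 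A homogeneous polynomial that vanishes on such a set must vanish identically, so $c_\alpha=0$ for $|\alpha|>\deg q-\deg p$. Thus $F$ is a polynomial of total degree at most $\deg q-\deg p$, meaning $q/p\in\mathbb{C}[\mathbf{x}]$ and so $p\mid q$.

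\emph{Main obstacle.} The delicate point is the multiplicity bookkeeping implicit in $\rt(a,\cdot)$ and in $e(b,\rt(a,q))$: read purely set-theoretically, $d_t(p,q)=0$ only gives the Zariski containment $V(p)\cap\mathbb{C}a\subseteq V(q)\cap\mathbb{C}a$ along almost every line, which by Hilbert's Nullstellensatz merely yields $p\mid q^k$ for some $k$ instead of the sharper $p\mid q$. Everything thus hinges on setting up the multiset/multiplicity-aware interpretation consistent with Theorem \ref{distmult}; once that is in place, the power-series argument in the last paragraph closes the proof without further work.
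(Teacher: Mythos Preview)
Your argument is correct and follows the same overall strategy as the paper: from $d_t(p,q)=0$ deduce that along (almost) every real line through the origin the roots of the lower-degree polynomial sit inside those of the higher-degree one, and then upgrade this line-wise information to a global divisibility. The paper's own proof is a two-sentence sketch that simply invokes Proposition~\ref{comonicreal} (really Proposition~\ref{comoniccomplex}) to pass from the line-wise statement to the global one, without spelling out how that passage produces a polynomial cofactor. Your formal-power-series device---expand $q/p\in\mathbb{C}[[\mathbf{x}]]$ using comonicity and kill the high homogeneous components via their vanishing on a positive-measure set of directions---is a clean, explicit way to fill exactly that gap, and it is more rigorous than what the paper records. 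Your flag about the multiset reading of $\rt(a,\cdot)$ is also well placed: the paper's Definition~\ref{rhfd} literally says ``set'', yet the theorem fails under the set-theoretic reading (e.g.\ $p=(1-x)^2$, $q=(1-x)(1-\tfrac{x}{2})$), so the multiplicity-aware interpretation you adopt is necessary and is left implicit in the paper.
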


\begin{proof}
The proof is immediate as it forces the smallest degree polynomial to have overlapping zero set with the biggest degree one along real lines. Thus, this determines the behaviour of the polynomials along (lines through the origin in) $\mathbb{R}^{n}$ and, again by Theorem \ref{comonicreal}, formally the polynomial itself, forcing therefore one polynomial to divide the other.
\end{proof}

A measure does not define a topology in general, but we can try to work around this. For this we use the following statregy based in the fact that we just need to measure closeness as our topology is already defined (the Euclidean one) and therefore we do not need to be able to define it completely though our measures of closeness which are not distances.

\begin{remark}[Jumping above topological requirements]\label{jum}
The topology of the space of polynomials and of the space where it takes zeros is already defined and determined: in both cases here the corresponding Euclidean topology. For this reason, we do not need a tool able to reconstruct the whole information about such topology, we just need a measure of how far one polynomial is of being a factor of another.
\end{remark}

Thus, the topology we use in the set of polynomials of some fixed degree $d$ by this measure is clearly the same one induced by the usual norm defined in \ref{naivepoldis}. In this topology our measure behaves as we expect because the roots depend continuously on both the (angle of the) radius taken and the coefficients of the original polynomial. The only difference is that this new measure identifies polynomials of higher degree with different polynomials in degree $d$ and thereofore does not extend to a metric in the whole space of polynomials. Now it is obvious to see that a non-RZ polynomial, like e.g., the TV screen will have strictly positive distance with any RZ polynomial.

\begin{ejemplo}[Positive distance with non RZ polynomials]\label{TVscreen}
The metric above separates RZ polynomials from non-RZ polynomials. The TV screen $p$, with its complex roots, lies always far from any RZ polynomial because there is some $\epsilon>0$ such that $d_{r}(p,q)>\epsilon$ not matter which RZ polynomial $q$ we choose.
\end{ejemplo}

And it is also clear in this way that (in the Euclidean topology of polynomials) the open sets around one RZ polynomial in the space of RZ polynomials can be easily visualized in the space of zeros as a cloud around the real zeros of such polynomial. Such cloud represents how can the zeros change when we take any small enough path in the Euclidean space of polynomials. These movements around zero are obviously \textit{rigid} in the sense that they have to have \textit{polynomial form} while the movements in the space of polynomials are not so because they are free movements in that space. Indeed this will be something important to have in mind during our explorations.

\begin{remark}[Rigidity in different representations of the paths]
A path in the space of RZ polynomials from $p$ to $q$ is just a (suitable) homotopy in the space of RZ polynomial, that is, some continuos map $H\colon[0,1]\to\mathbb{R}[\mathbf{x}]_{\mbox{RZ}}$ with $H(0)=p$ and $H(1)=q$. For our proof we will ask a bit more from this homotopy, like it happens to take smooth polynomials in the open interval $(0,1)$ (so that we can ensure that it respects $\rcs(p)$). But that is all. The dependence on $t$ does not need to be polynomial. However, when we look at what this means in the space where we visualize the zeros of the polynomial, the transformation performed by the homotopy $H$ when $t$ varies in $[0,1]$ follows a rigid path in $\mathbb{R}^{n}$ meaning that the small perturbations that we visualize along the whole movie represented by $H$ along $t\in[0,1]$ happen to deform and modify our ovaloids jumping along polynomials and thus not taking place in just a continuos way but also in a rigid (polynomial) way. This is what we have to remember along these paths so we do not confuse what happens in one space (the Euclidean space of polynomials) and in the other (the rigid space of the deformations of its roots).
\end{remark}

Thus we see how the constrained rigid sets in one space correspond with open sets in another space. We will use this as a tool to jump from the rigid space (deformations of real zero ovaloids in $\mathbb{R}^{n}$) to the unconstrained one (paths in the space $\mathbb{R}[\mathbf{x}]_{\mbox{RZ}}$) in order to obtain the polynomials we seek. In order to do this, we make use of determinantal representations that we already know well.

\section{Determinantal machinery}

In this section we explain why the determinant is expected to behave so universally. This will help us understand better the proof of the covering of opens that we will present in the next section.

When the initial matrix is not required to be the identity but any other signature matrix we have symmetric determinantal representations for all polynomials. This result tells us how powerful is (the ability of) the determinantal form to represent all polynomials through determinants of SLMP. There are several proofs available for this theorem using different techniques, see \cite{helton2006noncommutative}
\cite{quarez2012symmetric} or \cite{stefan2021short}.

\begin{teorema}[Helton-Vinnikov-MacCollough determinantal representation]\label{hvm}
Let $p\in\mathbb{R}[\mathbf{x}]$ be a polynomial of degree $d$ in $n$ variables with coefficients in $\mathbb{R}$ and such that $p(\mathbf{0})\neq0$. Then, for $N=2\binom{n+\lfloor\frac{d}{2}\rfloor}{n}$, there exist matrices $J,A_{1},\dots,A_{n}\in\Sym_{N}(\mathbb{R})$ with $J$ a signature matrix such that $$p(x)=p(\mathbf{0})\det(J)\det(J+x_{1}A_{1}+\cdots+x_{n}A_{n}).$$
\end{teorema}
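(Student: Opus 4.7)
The plan is to realize $p$ as a Schur complement of a bordered block matrix built from a Gram representation of $p$ in the monomials of degree at most $\lfloor d/2\rfloor$. First I would normalize: dividing by $p(\mathbf{0})$, it is enough to produce a signature matrix $J$ and symmetric matrices $A_1,\ldots,A_n$ of size $N=2M$ with $M=\binom{n+\lfloor d/2\rfloor}{n}$ satisfying $\det(J+x_1A_1+\cdots+x_nA_n)=\det(J)\,p(x)/p(\mathbf{0})$, since then multiplying through by $p(\mathbf{0})\det(J)$ and using $\det(J)^2=1$ yields the statement.

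The second step would be the Gram step. Let $m(x)$ be a column vector of length $M$ whose entries list all monomials in $x_1,\dots,x_n$ of degree at most $\lfloor d/2\rfloor$, arranged so that the first entry is the constant monomial $1$. Since the products $m_i(x)m_j(x)$ exhaust all monomials of degree at most $d$, the linear map $Q\mapsto m(x)^T Q m(x)$ from real symmetric $M\times M$ matrices to $\mathbb{R}[\mathbf{x}]_{\leq d}$ is surjective, so some symmetric $Q$ yields $p(x)=m(x)^T Q m(x)$ with $Q_{11}=p(\mathbf{0})$. By the real spectral theorem I would then write $Q=R^T J_0 R$ with $R$ invertible and $J_0$ a diagonal $\pm1$ matrix, and set $u(x)=Rm(x)$ so that $p(x)=u(x)^T J_0\, u(x)$.

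The third step would be the determinantal assembly. I would form the $2M\times 2M$ symmetric block matrix
\[
\mathcal{K}(x)=\begin{pmatrix} J_0 & S(x)\\ S(x)^T & J_0\end{pmatrix},
\]
where $S(x)$ is a symmetric linear pencil in $x_1,\ldots,x_n$ encoding the shift-by-$x_i$ operators on the ordered basis $m(x)$, graded so that each degree level of monomials sits in its own block and the off-diagonal identity blocks between consecutive levels propagate the multiplications. Repeated Schur complement expansion telescopes these linear shifts into the polynomial vector $u(x)$, and the Schur identity gives $\det(\mathcal{K}(x))=\det(J_0)\,\det(J_0-S(x)^T J_0 S(x))$, where by construction the inner determinant equals $p(x)/p(\mathbf{0})$. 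Taking $J=\diag(J_0,J_0)$ and reading off the $A_i$ as the linear coefficients of $\mathcal{K}(x)-J$ in each $x_i$ produces the representation at the required size $N=2M$.

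The hard part will be step three: arranging $S(x)$ linearly so that the Schur-complement telescoping reconstructs the full polynomial vector $u(x)$ without exceeding size $2M$, without breaking symmetry, and without producing spurious determinantal factors. A self-contained check essentially reproduces the combinatorial manipulation in Quarez's symmetric realization \cite{quarez2012symmetric}; alternative routes to exactly the same bound $N=2M$ are the constructions of Helton-McCullough \cite{helton2006noncommutative} and of \cite{stefan2021short}, and in practice I would quote one of these to avoid re-deriving the block arithmetic and focus only on transcribing their output into the signature-matrix form $J+\sum_i x_iA_i$ required by the statement.
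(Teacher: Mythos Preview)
The paper does not give its own proof of this theorem; it simply states the result and cites \cite{helton2006noncommutative}, \cite{quarez2012symmetric} and \cite{stefan2021short}. Your concluding paragraph, where you say you would in practice quote one of those constructions, is therefore exactly what the paper does, and at that level your proposal matches the paper.

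That said, the sketch you offer before that fallback has a real gap. In step two you claim that with $m(x)$ listing the monomials of degree at most $\lfloor d/2\rfloor$ the products $m_i(x)m_j(x)$ exhaust all monomials of degree at most $d$. This is false when $d$ is odd: the products have degree at most $2\lfloor d/2\rfloor=d-1$, so no symmetric Gram matrix $Q$ of size $M=\binom{n+\lfloor d/2\rfloor}{n}$ can realize a genuine degree-$d$ polynomial as $m(x)^{T}Qm(x)$. The cited constructions do \emph{not} pass through a symmetric Gram factorization $Q=R^{T}J_{0}R$; they build the linear pencil directly from shift operators on the monomial basis and obtain the signature structure from the block layout, which is why the bound $N=2\binom{n+\lfloor d/2\rfloor}{n}$ is correct even for odd $d$. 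Your step three gestures at this mechanism but, as written, the single Schur complement $\det(J_{0}-S(x)^{T}J_{0}S(x))$ is quadratic in $x$ inside a determinant of fixed size and does not by itself produce the desired $p(x)/p(\mathbf{0})$; the actual argument requires a careful nested block structure whose iterated Schur complements telescope correctly, and this is precisely the combinatorics you would be importing from the references. So your instinct to cite rather than re-derive is the right one, and coincides with the paper's treatment.
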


This theorem does not provide the kind of representations that we want. In fact, notice that it works for \textit{all polynomials} and not just the RZ ones. The signature matrix $J$ will in general have (diagonal) entries with value $-1$ while we want to be able to have a symmetric representation with an identity in its place instead (monic).

\begin{remark}
Taking a signature matrix $J$ instead of an identity matrix opens the possibility of representing all polynomials as determinants. This is a doubly edged sword for us. On the one hand, this says that this framework cannot distinguish between RZ and non-RZ polynomials. On the other hand, it tells us that the determinantal form is powerful enough to carry the representation of all the polynomials. Now we just have to be able to establish this same power for \textit{nice} (RZ respecting original RCS) multiples of RZ polynomials using just the identity matrix as our initial matrix. 
\end{remark}

For this reason, we do not use this theorem for the representation it provides, which is of no help for us. We use it for what it tells us about the power (universality) of the determinants of SLMPs of that size $N$ to represent the \textbf{whole} \textit{metric (vector)} space of all polynomials of degree $d$ in $n$ variables. In particular, this representation tells us that all such polynomials \textit{admit} this form and therefore, looking at it the other way around, that varying all the entries in the SSMLPs (Signature Symmetric MLPs) given of this size $N$ and applying the determinant has to end up giving as a result the whole space of polynomials of degree $d$ in $n$ variables. Some form of generic unicity in the representation of product of linear forms will be central for the following task. We develop it as its own section due to importance of the next result.

\section{Opens around determinants}

The section above showed that symmetric determinants are universal in the sense that they manage to represent the whole space of polynomials if we allow the initial matrix to be a signature matrix. However, this is bad for us in certain sense: we want to use symmetric determinants to represent only RZ polynomials. This tells us that a further restriction has to be made. The restriction we take is to fix the initial matrix to be a PD matrix and, wlog, we can take this matrix to be the identity matrix. Taking this matrix will avoid that certain cancellations happen and, in consequence and in accordance with Theorem \ref{dime}, we will end up producing multiples of our polynomials. However, we will eventually ensure that these multiple are nice in the sense that they respect the original RCS. The theorem we wanto prove next is the folllowing.

\begin{teorema}
Let $L=\prod_{i}^{s} l_{i}$ be a product of linear forms with $l_{i}(\mathbf{0})=1$ for some size $s$ big enough depending on $d$ and $n$. Then any polynomial $p$ of degree $d$ close enough to $L$ admits a cofactor $q$ with $q(\mathbf{0})=1$ and a determinantal representation that is a perturbation of $\det(\diag(l_{i}))$. That is, $qp=\det(I+\sum x_{i}A_{i})$ with $\dist(I+\sum x_{i}A_{i},\diag(l_{i}))\leq\epsilon$ for some distance $\dist$ between matrix polynomials.
  \end{teorema}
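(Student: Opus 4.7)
The plan is an implicit function theorem argument centred at the base point $M_0 = \diag(l_1, \ldots, l_s)$, which satisfies $\det(M_0) = L$. Denote by $\mathcal{M}_s$ the affine space of size-$s$ MSLMPs $I + \sum_i x_i A_i$ with $A_i \in \Sym_s(\mathbb{R})$ and by $V^\circ_{\leq s}$ the space of comonic polynomials of degree at most $s$. The hypothesis that $p$ is close to $L$ in the sense of Definition~\ref{rhfd}, together with Theorem~\ref{distmultall}, guarantees a comonic polynomial $q_0$ of degree $s - d$ with $pq_0$ close to $L$. The goal becomes solving the equation $\det(M) = pq$ for $(M, q)$ near $(M_0, q_0)$.

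First I would compute the differential of $\det$ at $M_0$. A direct calculation yields, for a direction $H = \sum_i x_i B_i$ with $B_i$ symmetric,
\[
d(\det)_{M_0}(H) = L \cdot \tr(M_0^{-1} H) = \sum_{i=1}^{s} H_{ii} \prod_{k \neq i} l_k,
\]
so only the diagonal entries of $H$ contribute at first order; the off-diagonal entries $H_{ij}$ enter at second order with a fixed-sign contribution proportional to $H_{ij}^2 \prod_{k \neq i, j} l_k$ coming from $-\tfrac{1}{2}\tr((M_0^{-1}H)^2)$. For $s$ sufficiently large in terms of $d$ and $n$ and for generic choice of the $l_i$, one expects the span of the first-order diagonal directions, the quadratic cone of the off-diagonal directions and the linear subspace $p \cdot V^\circ_{\leq s-d}$ of multiples of $p$ to jointly cover every direction in $V^\circ_{\leq s}$ near $L$. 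A Newton-type iteration starting from $(M_0, q_0)$ then produces $(M, q)$ with $\det(M) = pq$ and $\dist(M, \diag(l_i))$ controlled by the initial error $\|L - pq_0\|$, which is small by hypothesis.

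The main obstacle is the sign constraint from the second-order off-diagonal term: these contributions populate only a half-cone rather than a linear subspace, so naive first-order surjectivity onto $V^\circ_{\leq s}/(p \cdot V^\circ_{\leq s-d})$ fails outright. Overcoming this requires either recentring at a base point slightly shifted from $\diag(l_i)$ so that both signs become accessible as first-order perturbations of the shifted point, or exploiting the redundancy gained from taking $s$ large in order to splice together positive and negative quadratic contributions from independent index pairs. Resolving this cone-versus-subspace issue, coupled with the dimensional bookkeeping required to show that the $ns$ diagonal parameters together with the off-diagonal quadratic freedom suffice once multiples of $p$ are modded out, is where the bulk of the analytical work will lie.
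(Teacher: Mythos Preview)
Your approach is considerably more precise than the paper's. The paper's argument is essentially a heuristic: it expands $\det(I+\sum x_iA_i)$ as a polynomial in the matrix entries, appeals to ``continuity of the operations involved'' and asserts that ``letting $s$ grow will allow for more freedom in the coefficients'', concluding that for $s$ large and $p$ close enough to $L$ the desired factorisation exists. No differential is computed and no surjectivity is verified; it is a degrees-of-freedom count dressed as a proof.

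You go further: you compute $d(\det)_{M_0}$ explicitly, observe that at a diagonal base point only the diagonal entries of $H$ contribute at first order, and that the off-diagonal entries enter quadratically with a definite sign. This is exactly the obstruction the paper's hand-wave glides over, and it is a real one. The map $\det\colon\mathcal{M}_s\to V^{\circ}_{\leq s}$ is \emph{not} a submersion at $\diag(l_i)$: the image of its differential has dimension at most $ns$, far below $\dim V^{\circ}_{\leq s}$. Any rigorous local argument must therefore either (i) pass to the quotient by $p\cdot V^{\circ}_{\leq s-d}$ and show that the $ns$ first-order diagonal directions surject onto it---the ``dimensional bookkeeping'' you mention, which is not automatic since the quotient still has dimension $\binom{n+d}{d}-1$ growing polynomially in $d$ while $ns$ is only linear in $s$ for fixed $n$---or (ii) recentre at a nondiagonal base point so that the off-diagonal parameters become genuinely first-order, which changes the problem you are solving.

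In short, you have aimed at the same target as the paper but have actually located the analytic difficulty; the paper does not address it at all. Neither argument, as written, closes the cone-versus-subspace gap you describe. Yours is simply honest about where it lies.
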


\begin{proof}
One can see $L$ as a solution of the equation $\det(I+\sum x_{i}A_{i})$ for the entries of the symmetric matrices $A_{i}$ as variables. Expanding $\det(I+\sum x_{i}A_{i})$ produces a polynomial in the variables $x_{i}$ and $a_{ij}$. We can now use an ansatz to force the form \begin{gather*}\det(I+\sum x_{i}A_{i})=pq\end{gather*} for $p$ of degree $d$ and $q$ of degree $s-d$. This can be done because of continuity of the operations involved in taking the determinant and decomposing into factors of degree $d$ and $s-d$ whenever this is possible. Letting thus $s$ grow will allow for more freedom in the coefficients on the set of variables $\mathbf{x}$ of $p$ paying the only price of having $q$ of higher degree, which is a price that is small for us. With $s$ big enough, as $p$ has fixed degree, we have introduced enough freedom in these coefficients to allow them to change after certain compensations are made in $q$. Thus, as $p$ being close enough to $L$ means that we only need to make very small changes in the cofficients of $L$, we can gaurantee that for $s$ big enough and $p$ close enough to $L$, the polynomial $p$ admits a cofactor $q$ such that $qp$ with a representations given by a small perturbation $I+\sum x_{i}A_{i}$ of $\diag(l_{i})$. This finishes the proof.
\end{proof}

An equivalent form of writting this theorem is even more revealing about the power of what we have just proved.

\begin{teorema}\label{keyth}
  Let $p$ be a degree $d$ RZ polynomial sufficiently close to a product of linear forms $L=\prod_{i}^{s} l_{i}$ with $l_{i}(\mathbf{0})=1$ for some size $s$ big enough. Then $p$ admits a cofactor $q$ with $q(\mathbf{0})=1$ and a determinantal representation that is a perturbation of $\det(\diag(l_{i}))$. That is $qp=\det(I+\sum x_{i}A_{i})$ with $\dist(I+\sum x_{i}A_{i},\diag(l_{i}))\leq\epsilon$ for some distance $\dist$ between matrix polynomials.
\end{teorema}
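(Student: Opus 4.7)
The statement is essentially a reshuffling of the previous theorem, with the roles of $p$ and $L$ swapped relative to what is ``given'' and what is ``produced'', so my plan reuses the same machinery with a different emphasis. The starting point is the diagonal representation: set $M_{0}:=\diag(l_{1},\dots,l_{s})$, a MSLMP of size $s$ with $\det(M_{0})=L$. Writing $l_{i}=1+\sum_{j}\alpha_{ij}x_{j}$, the matrices $A_{j}^{0}:=\diag(\alpha_{1j},\dots,\alpha_{sj})$ realize $M_{0}=I+\sum_{j}x_{j}A_{j}^{0}$ and give a distinguished base point in $\Sym_{s}(\mathbb{R})^{n}$. The plan is to study the smooth evaluation map
\[
  \Phi\colon\Sym_{s}(\mathbb{R})^{n}\longrightarrow\mathbb{R}[\mathbf{x}]_{\leq s},\qquad (A_{1},\dots,A_{n})\longmapsto\det\!\Bigl(I+\sum_{j}x_{j}A_{j}\Bigr),
\]
in a neighborhood of $(A_{1}^{0},\dots,A_{n}^{0})$, where $\Phi(A^{0})=L$, and to show that its image fills a whole neighborhood of $L$ inside the locally closed subset of comonic polynomials that factor as $p\cdot q$ with $\deg p=d$ and $\deg q=s-d$.

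Next, I would exploit the freedom afforded by taking $s$ large. The parameter space $\Sym_{s}(\mathbb{R})^{n}$ has dimension $n\binom{s+1}{2}$, which grows quadratically in $s$, whereas fixing a degree-$d$ factor $p$ close to a chosen subproduct $\tilde{p}:=\prod_{i=1}^{d}l_{i}$ and letting the cofactor $q$ vary in degree $s-d$ leaves a target of dimension only polynomial in $s$ of lower order. Combined with the universality content of Theorem \ref{hvm} (symmetric pencils of size $N(s,n)$ realize every comonic polynomial as a signed determinant), this dimensional surplus is what should allow $\Phi$ to be a submersion onto a neighborhood of $L$ in the stratum of factorable polynomials once $s$ is taken large enough. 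Concretely, I would factor nearby targets as $p\cdot q$ using the implicit function theorem applied to a resultant-type condition (this factorization is stable near $L$ because $L$ itself splits into distinct comonic linear factors generically, so we can group $d$ of them to form $\tilde{p}$ and the remaining $s-d$ to form $\tilde{q}:=L/\tilde{p}$). Then the submersion hypothesis produces, for each $p$ sufficiently $d_{t}$-close to $\tilde{p}$, symmetric matrices $A_{j}$ close to $A_{j}^{0}$ with $\Phi(A)=p\cdot q$ for some $q$ close to $\tilde{q}$, and in particular $q(\mathbf{0})=1$ and $\dist(I+\sum x_{j}A_{j},\diag(l_{i}))\leq\epsilon$.

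The main obstacle I expect is the submersion claim itself: dimension counting alone does not guarantee surjectivity of the differential $d\Phi_{A^{0}}$ onto the tangent space of the factorable stratum at $L$. One has to rule out that the tangent image of $\Phi$ misses a direction that is transverse to the $pq$-factorization constraint. The cleanest way I see to do this is to compute $d\Phi_{A^{0}}$ directly using Jacobi's formula $d\det(M)[H]=\det(M)\tr(M^{-1}H)$: at $M=M_{0}=\diag(l_{i})$ the differential becomes
\[
  d\Phi_{A^{0}}(B_{1},\dots,B_{n})=L\cdot\sum_{j=1}^{n}x_{j}\sum_{i=1}^{s}\frac{(B_{j})_{ii}}{l_{i}}+L\cdot\sum_{j\neq k}x_{j}x_{k}\sum_{i<i'}\frac{(\cdots)}{l_{i}l_{i'}}+\cdots,
\]
a sum ranging over diagonal and off-diagonal perturbations whose span, for $s$ large, sweeps out every polynomial multiple of $L$ of degree $\leq s$ modulo the lower degree constraints. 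Once this spanning is verified (the delicate bookkeeping being the fact that the linear forms $l_{i}$ are in generic position when $L$ has no repeated factors), the constant-rank theorem yields the desired local surjectivity onto the factorable stratum and the conclusion follows. A limiting or perturbation argument then handles non-generic $L$'s, since generic products of linear forms are dense among products of linear forms.
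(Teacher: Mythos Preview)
Your plan and the paper's argument share the same skeleton, but you make explicit what the paper leaves as a sketch. The paper does not give Theorem~\ref{keyth} its own proof at all; it is introduced as ``an equivalent form of writing'' the immediately preceding theorem, whose proof consists of: viewing $L$ as the diagonal solution of $\det(I+\sum x_iA_i)=pq$, invoking ``continuity of the operations involved in taking the determinant and decomposing into factors,'' and asserting that ``letting $s$ grow will allow for more freedom in the coefficients'' so that any small perturbation of $p$ can be absorbed by compensating in $q$. No differential is computed, no rank condition is checked, and no version of the implicit function theorem is named.

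Your route is the natural formalization of this heuristic: the evaluation map $\Phi$, Jacobi's formula for $d\Phi_{A^{0}}$ at the diagonal base point, and the constant-rank theorem are exactly the tools needed to convert the paper's ``continuity and freedom'' into an honest local-surjectivity statement onto the stratum of polynomials factoring as (degree $d$)$\times$(degree $s-d$). You also correctly isolate the one step that carries all the weight---that the image of $d\Phi_{A^{0}}$, together with the $\tilde p\cdot\delta q$ directions, covers every $\tilde q\cdot\delta p$ direction---whereas the paper simply asserts the conclusion. So your proposal is not a genuinely different argument so much as the rigorous version of the same argument; what you flag as ``the main obstacle I expect'' is precisely the gap that the paper's proof passes over in one sentence, and neither text actually closes it.
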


All this tells as that, when we have a polynomial $L$ with a determinantal representation of size $s$ big enough, the perturbations of such representation cover the space of all degree $d$ polynomials around it which are sufficiently close. And it turns out that, because of compactness, convexity and smoothness, that is all we need to succeed.

\comm{
\begin{proposicion}[Representations of generic product of linear forms]\label{genericity}
Let $L=\prod_{i=1}^{d}l_{i}$ be a product of linear forms $l_{i}\in\mathbb{R}[x_{1},\dots,x_{n}]$ with $l_{i}(\mathbf{0})=1$ for all $i\in[d]$. Then the determinantal representations of $L$ are generically of the form $\det(\diag(\pm l_{i}))$ with the negative appearing an even number of times.
\end{proposicion}

\begin{proof}
Suppose that, in the generic case, there exist another determinantal representation \begin{gather*}L=\det(J+x_{1}A_{1}+\cdots+x_{n}A_{n})\end{gather*} of some size $s$. Perturbing the coefficients of the factors $l_{i}$ produces a new product of linear forms $\Tilde{L}$. By genericity of the representation almost all these new $\Tilde{L}$ formed by perturbation should admit a perturbed representation of the form \begin{gather*}\Tilde{L}=\det(J+x_{1}\Tilde{A}_{1}+\cdots+x_{n}\Tilde{A}_{n}).\end{gather*} However, the fact that $L$ admits the form above implies the presence of many cancellations that could not be kept along these perturbations generically. This implies that the only representations that exist generically for products of linear forms are the trivial ones of the form $\det(\diag(\pm l_{i}))$ with the negative appearing an even number of times, as we wanted to prove.
\end{proof}

Now this result let us very close to the main result we want to establish in this section. The next corollary will help.

\begin{corolario}[Covering the whole space]\label{coverthewho}
Let $\mathbb{R}[\mathbf{x}]_{\leq d}$ be the set of polynomials of degree $d$ in $n$ variables and set $N$ as in Theorem \ref{hvm}. Then the implied map $$H_{n,d}\colon\mathbb{R}^{(n+1)(\frac{N^{2}-N}{2}+N)}\to\mathbb{R}[\mathbf{x}]$$ verifies $\mathbb{R}[\mathbf{x}]_{\leq d}\subseteq\Ima(H).$ Moreover $H_{n,d}$ is continuous.
\end{corolario}

\begin{proof}
Just some limiting argument is enough to cover the polynomials that are $0$ at the origin. Moreover, smaller degree polynomials are casually included taking minors and completing with $1$s in the diagonal in the trivial way.
\end{proof}

The expressivity and the continuity of $H_{n,d}$ imply that such map has enough power (universality) to represent opens around polynomials of degree $d$ in $n$ variables. In these opens, there can be close enough multiples of polynomials of smaller degree. We aim to cover these using opens around linear forms deforming monic representations. In particular, the fact proven in Proposition \ref{genericity} that the general linear form should have the trivial representation as basically its unique representation paves the way to recoginise the representations of all polynomial sufficiently close around linear forms.

\begin{corolario}[Polynomials around products of linear forms]\label{keycoro}
Let $p=\prod_{i=1}^{d}l_{i}$ be a product of linear forms with $l_{i}\in\mathbb{R}[\mathbf{x}]$ and $l_{i}(\mathbf{0})=1$ for all $i\in[d]$. Then $p$ has a trivial determinantal representation \begin{gather}\label{linearprod}\det(\diag((l_{i})_{i=1}^{d},1,\dots,1))\end{gather} of size $N$. Moreover, any degree $d$ RZ polynomial $r\in\mathbb{R}[\mathbf{x}]$ close enough to $p$ (using the measures introduced above) has a multiple $sr\in\mathbb{R}[\mathbf{x}]$ with a determinantal representation obtained after a perturbation of the representation of $p$ above.
\end{corolario}

\begin{proof}
Observe that the equations obtained after equating the coefficients of each monomial in the variables $\mathbf{x}$ given by \begin{gather}\label{solutionwithi}
  qp=\det(I+\sum_{i=1}^{n}x_{i}A_{i})
\end{gather} in the variables $q_{\alpha},p_{\alpha}$ (the coefficients of $q$ and $p$ at the monomial $\mathbf{x}^{\alpha}$) and $a_{k,i,j}=a_{k,j,i}$ (the entries in the matrices $A_{k}$) has a solution such that $p=\prod_{i=1}^{N}l_{i}$ and $q=1$. Namely, Equation \ref{linearprod} provides such a solution. Morevoer, any small perturbation of $p$ such that Equation \ref{solutionwithi} has a solution produces still a RZ polynomial. Notice, moreover, that a small perturbation of $q$ will have all its roots \textit{close to infinity}. The question is then to know if any sufficiently small perturbation of $p$ produces an Equation \ref{solutionwithi} having a solution. To see this we expand this system. As $p$ has degree $d$ and the matrices have size $N$, $q$ has at most degree $N-d$. Thus what we have is \begin{gather*}
  \left(\sum_{\beta\in\mathbb{N}^{n}, |\beta|=N-d}q_{\beta}\mathbf{x}^{\beta}\right)\left(\sum_{\alpha\in\mathbb{N}^{n}, |\alpha|=d}p_{\alpha}\mathbf{x}^{\alpha}\right)=\sum_{\sigma\in \mathfrak{S}_{N}}\prod_{i=1}^{N}\sum_{j=0}^{n}a_{ji\sigma(i)}x_{j}
\end{gather*} with $x_{0}=1$, $a_{0ij}=\delta_{ij}$ and $a_{kij}=a_{kji}$ for all $k,i,j$. We build the equation given by this expression for the particular coefficient of the monomial $\mathbf{x}^{\gamma}$. But because we want to see that a small perturbation in the coefficients $p_{\alpha}$ still allows us to find a solution we will expand this equation supposing instead that perturbation is in place so that we look at the parametric system given by the equation \begin{gather*}
  \left(\sum_{\beta\in\mathbb{N}^{n}, |\beta|=N-d}q_{\beta}\mathbf{x}^{\beta}\right)\left(\sum_{\alpha\in\mathbb{N}^{n}, |\alpha|=d}(p_{\alpha}-\epsilon_{\alpha})\mathbf{x}^{\alpha}\right)=\sum_{\sigma\in \mathfrak{S}_{N}}\prod_{i=1}^{N}\sum_{j=0}^{n}a_{ji\sigma(i)}x_{j}
\end{gather*} with $x_{0}=1$, $a_{0ij}=\delta_{ij}$ and $a_{kij}=a_{kji}$ for all $k,i,j$. We want to see that, if there is a solution for $\epsilon_{\alpha}=0$ for all $\alpha$, then there exists an open interval $I$ around $0$ such that the system has a solution when $\epsilon_{\alpha}\in I$ for all $\alpha$. The coefficients of $\gamma$ at both sides is:\begin{gather*}
  \sum_{\beta,\alpha\in\mathbb{N}^{n},|\beta|=N-d,|\alpha|=d,\alpha+\beta=\gamma}q_{\beta}(p_{\alpha}-\epsilon_{\alpha})=\sum_{\sigma\in\mathfrak{S}_{N}}\mathbf{a}_{\sigma}^{\gamma}
\end{gather*}

Additionaly, as $p$ has degree $d$, by Theorem \ref{hvm}, letting $I$ take signs so that we get instead some signature matrix $J$, we know that the equation \begin{gather}\label{signa}
  qp=\det(J+\sum_{i=1}^{n}x_{i}A_{i})
\end{gather} has solutions for all polynomials $p$ (not just the RZ ones) of degree $d$ and therefore small perturbations of $p$, wich correspond to small perturbations of the determinantal representation and of the cofactor $q$, admit a solution towards a determinantal representation of the form of Equation \ref{signa}. But this representation is inestable in the following sense. Generically, as $J$ is a signature matrix, small perturbations in the matrix entries correspond end up producing a polynomial that is not RZ in the majority of the cases because the RZ-ness of the product is achieved through many cancellations that kill the presence of the $-1$ entries in $J$.

The only problem is that we require to be able to ensure that $J$ can be taken to be $I$ along the perturbation, that is, that the solution with $I$ is not isolated.

Suppose that the solution in Equation \ref{solutionwithi} is isolated.

By genericity, a small perturbation of the solution  corresponds to a small perturbation of $p$. To see this, observe that, due to the continuity of the map $H_{n,d}$ in Corollary \ref{coverthewho}, there must exist a polynomial path $w\colon[0,1]\to\mathbb{R}[\mathbf{x}]$ starting in some smooth polynomial $w(0)=\det(J+\sum_{i=1}^{n}x_{i}A_{i}(0))$ and finishing in $p=w(1)=\det(J+\sum_{i=1}^{n}x_{i}A_{i}(1))$

This follows performing small perturbations in $p$ that, by continuity, correspond to small perturbations in its determinantal representation above. As a small perturbation of a generic (smooth) RZ polynomial is RZ (see Theorem \ref{nuijth}), these small perturbations towards the interior of the set of RZ polynomials are also RZ and, as determinants of SSLMPs of size $N$ cover the whole space of polynomials of degree $d$ in $n$ variables, these perturbations cover all polynomials of the form $rs$ with $\deg(r)=d$ sufficiently close to $p$ because $r$ has as many constraints as the determinants of SSLMPs of size $N$ can handle. Thus the movement around $p$ by perturbations of $H_{n,d}$ should be able to cover these opens without any problem. If necessary, these small perturbation could always fix the initial matrix to be diagonal and constantly $1$. If the polynomials are not generic, a limit argument covers then by the restriction of the size of the matrices to be $N$ and continuity.
\end{proof}

This perturbation result is our main tool. Observe that polynomials that could be decomposed in the form $sr$ with $r$ of degree $d$ could still be too far from $p$ to guarantee that they fall within a monic symmetric determinantal perturbation of it. The next step is therefore getting close enough to $p$. For that, compactness and the measure above will be of help.
}

\section{Finitely covering a real zero set through compactness}

The idea of performing infinitely close approximations to represent similar objects has appeared before in the literature (see \cite{netzer2015smooth}), but on my consideration this line of attack was not exploited enough and in suficcient depth. In particular, the key point is to reach to use just a finite number of steps after which a deformation just falls within the reach of the possibilities of the expressions we need using the power of the theorems introduced on the sections above. This is what we do here.

\begin{remark}[Form infinite to finite]
We will use compactness to allow for a representation of finite size that can easily be deformed into the polynomial we want to represent using Theorem \ref{keyth}. 
\end{remark}

When the product of linear forms are not close enough to our polynomial objective $p$, we will have to build something that is close enough and that can be deformed into it. For this, we use families of tangent hyperspaces. Remember that, for simplicity and without loss of generality, we assume $p$ to be RZ, comonic, generic and smooth and with compact real zero set.

\begin{construccion}[Tangent product]\label{cons}
Let $\binom{V}{N}$ be the collection of subsets of cardinal $N$ of points of the real zero set $V$ of $p$. For each such collection $U\in\binom{V}{N}$ we build a product of linear forms $p_{U}:=\prod_{u\in U}l_{u}$ with $l_{u}$ tangent to $p$ at the point $u\in V$ and $l_{u}(\mathbf{0})=1$. Remember that $p$ is generic (and therefore smooth).
\end{construccion}

The open expressible as a perturbation around $p_{U}$ gets in principle very close to the real zero set $V$ but it does not have to match it. We need to ensure that this happens providing enough freedom of movement through increases of its degree.

\begin{remark}[Rigidity is a problem for closeness]
As the movements as seeing in the set $\mathbb{R}^{n}$ where the real zeros of $p$ are is a requirement, it is clear that a perturbation can lie very close without it being allowed to actually reach a multiple of the polynomial itself. This is so because the open bands reachable around the ovaloids of the approximating product of linear form $p_{U}$ might get very thin in points that are far from the zero set $V$ of $p$. This fact implies that the rigid movements do not have enough room to go in the direction of $p$ following the lines through the origin corresponding to these thin parts of the band around the ovaloids of $p_{U}$.
\end{remark}

Thus the products do not in principle cover a whole open around the ovaloids of the real zero set $V$ of our initial polynomial $p$. We need to increase the degree of our approximating product of linear forms several times having in mind that the opens covered could change when we change the size of the matrices. However, this change will always go in our favor when the size of the matrix representation increases.

\begin{teorema}[Matrix growth increase open bands]\label{openbands}
Let $S=I_{s}+\sum_{i=1}^{n}x_{i}A_{i}$ and $T=I_{t}+\sum_{i=1}^{n}x_{i}A'_{i}$ be MSLMP of sizes $s$ and $t$ respectively. Suppose that $S$ can be deformed so that its determinant can cover RZ polynomials in an open band $U_{S}$ around each ovaloid of $\det(S)$ and the same for $U_{T}$. Then $S\oplus T$ is a MSLMP of sizes $s+t$ which can be deformed to cover (at least) all RZ polynomials in an open band $U_{S}\cup U_{T}$ of the ovaloid of the real zero set of $\det(S)\det(T)$.
\end{teorema}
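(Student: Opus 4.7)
The plan is to exploit the block-diagonal structure of the direct sum together with the multiplicativity of the determinant, reducing the statement to the two individual hypotheses on $S$ and $T$. First I would verify that $S \oplus T$ is itself a bona fide MSLMP: writing $S \oplus T = I_{s+t} + \sum_{i=1}^{n} x_i (A_i \oplus A'_i)$, each coefficient matrix $A_i \oplus A'_i$ is symmetric as the direct sum of two symmetric matrices, and the constant term is $I_{s+t}$. Since $\det(S \oplus T) = \det(S)\det(T)$, the real zero set of $\det(S \oplus T)$ is exactly the union of those of $\det(S)$ and $\det(T)$, so its ovaloids are precisely the ovaloids of the two factors considered together.

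Next I would argue that block-preserving deformations already suffice. Any deformation that touches only the entries of the $S$-block, keeping the $T$-block and the zero off-diagonal blocks fixed, produces a matrix of the form $\tilde S \oplus T$ with determinant $\det(\tilde S)\det(T)$. By the hypothesis on $S$, as $\tilde S$ ranges over the admissible deformations, $\det(\tilde S)$ sweeps out all RZ polynomials in the open band $U_S$, and an analogous statement holds for deformations of $T$. Combining both simultaneously, the determinant of the deformed $S \oplus T$ realizes every product $\det(\tilde S)\det(\tilde T)$ with $\det(\tilde S)$ in $U_S$ and $\det(\tilde T)$ in $U_T$; since the product of two RZ polynomials is RZ (along each line, the product of real-rooted restrictions stays real-rooted), this lands in the space of RZ polynomials, which is why the statement includes only the qualifier \emph{at least} (off-diagonal perturbations might reach further).

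Finally I would confirm that the set of such products covers the target open band. Given any RZ polynomial in $U_S \cup U_T$ sufficiently close to $\det(S)\det(T)$, I would group its ovaloids according to which of the two disjoint bands they lie in and argue, using the continuity of the elementary symmetric functions of the roots along each radial line together with the compactness of each ovaloid, that the polynomial splits over $\mathbb{R}[\mathbf{x}]$ as $p_S p_T$ with $p_S$ a RZ polynomial whose zero set lies in $U_S$ and similarly for $p_T$. The main obstacle will be precisely this splitting step: one must rule out the possibility that an ovaloid in $U_S$ and one in $U_T$ are swept out by a single irreducible factor whose zero locus threads through both bands, a pathology that should be excluded by shrinking $U_S$ and $U_T$ to be disjoint and appealing to the closeness to the already-factored polynomial $\det(S)\det(T)$ together with the genericity assumption on the background polynomial $p$.
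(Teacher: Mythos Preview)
Your first two paragraphs are exactly the paper's argument: the paper's entire proof reads ``Obvious performing the deformation in $T$ and in $S$ independently. In fact, $S\oplus T$ has extra room (out of the block diagonal) for deformation in a smooth way that still gets you closer to the real zero set $V$ of $p$.'' So the block-diagonal decomposition, the multiplicativity of the determinant, and the observation that the off-diagonal entries give additional freedom are precisely what the paper has in mind, and your write-up of these points is considerably more explicit than the paper's single sentence.

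Your third paragraph, however, is work the paper never attempts and does not need. You are reading the conclusion as ``every RZ polynomial whose zero set lies in $U_S\cup U_T$ is reachable,'' which would indeed require the delicate splitting $p_S p_T$ you describe, together with the irreducibility obstruction you correctly flag. The paper's phrasing is looser: in light of the one-line proof and of how the result is invoked downstream (Construction~\ref{tangcov} and the proof of Theorem~\ref{glc}), the intended meaning is only that the deformations of $S\oplus T$ sweep out zero sets filling the band $U_S\cup U_T$, i.e.\ one can independently push the $S$-ovaloids through $U_S$ and the $T$-ovaloids through $U_T$. For the application, each block $L_i$ is separately deformed toward a multiple of the target $p$ via Theorem~\ref{keyth}, and the direct sum simply collects these; no global factoring of an arbitrary RZ polynomial in the band is ever required. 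So your splitting step, and the obstacle you identify in it, can be dropped.
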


\begin{proof}
Obvious performing the deformation in $T$ and in $S$ independently. In fact, $S\oplus T$ has extra room (out of the block diagonal) for deformation in a smooth way that still gets you closer to the real zero set $V$ of $p$.
\end{proof}

We obviously can do this sequentially. Thus, we will tie together (diagonally) several of these products of linear form in its trivial matrix representation presented in Construction \ref{cons} until we cover completely all the ovaloids of the real zero set $V$ of our original polynomial $p$.

\begin{construccion}[Tangent cover]\label{tangcov}
The opens of allowed determinantal perturbations reach of all the $p_{U}$ with $U\in\binom{V}{N}$ covers the whole real zero set $V$ with rigid opens around ovaloids. But as $V$ is compact (because $p$ is generic), to cover it we just need to use a finite subset $F\subseteq\binom{V}{N}$. Thus the product of linear forms $P=\prod_{U\in F}p_{U}$ has a determinantal representation (trivial) that can be rigidly deformed into a determinantal representation of a multiple of $p$ because the opens of the rigid deformations allowed for this product cover the whole ovaloids by compactness.
\end{construccion}

The first thing to note in this construction is that we have to use the deformation for each collection $U$ of $N$ hyperplanes (tangent hyperplanes to points in the ovaloids of the real zero set) because changing the size of the representation could change their expressivity (size of the opens near points in the ovaloids). For this reason, we keep this size constant and just glue several size $N$ representation by taking the long diagonal matrix corresponding to their concatenation (direct sum). This process is going to produce a huge matrix, which is something that we would expect for the RCSs as we said above. In principle, we do not even have control of the size obtained, as this depends on a measurability of compactness under the determinant map. The form of the product of linear forms $P$ obtained is also helpful because it will respect the original RCS of $p$. This happens because we took tangent hyperplanes and deformed them along paths of smooth polynomials, which guarantee that no intersections take place and therefore that the RCS of $p$ is respected at the end of the path.

\begin{remark}[Expected form of things]
Thus at the end we will get a small perturbation of a huge block diagonal MSLMP that will produce a MSLMP whose determinant is a \textit{nice} factor of the original polynomial obtained through bendings of hyperplanes tangent to the original ovaloids along rigid smooth polynomial paths (as seen in the set $\mathbb{R}^{n}$ where the zeros live).
\end{remark}

We will also need some notions of the relation between smoothness and convexity while performing a deformation. These deformations will allow us to traverse the open set around our original polynomial in a satisfactory way.

\section{Convexity, smoothness and the ability to maintain the RCSs untocuhed along paths}

This section is a futher consequence of Theorems \ref{nuijth} about the topological simplicity of the space of RZ polynomials and Theorem \ref{hvm} for the universality of determinantal representations. We observe here in detail how smooth deformations of tangent hyperplanes conducting to producing a multiple of the polynomial $p$ as described in the section above are forced to respect the rigidly convex set $\rcs(p)$ of $p$ by its nature. Hence, in this way the GLC comes naturally from the nature of the deformations performed. We remind a result that says that we can move the origin within the RCS and still get a RZ polynomial.

\begin{teorema}[RZ-ness is conserved under small enough perturbations of the origin]
Let $p\in\mathbb{R}[\mathbf{x}]$ be a RZ polynomial and $\mathbf{x}_{0}\in\rcs(p)$ with $p(\mathbf{x}_{0})\neq0$. Then the polynomial $q(\mathbf{x}):=p(\mathbf{x}-\mathbf{x}_{0})\in\mathbb{R}[\mathbf{x}]$ is RZ and its RCS is just a translation by $\mathbf{x}_{0}$ of the original $\rcs(p)$, i.e., $\rcs(p)=\rcs(q)-\mathbf{x}_{0}.$
\end{teorema}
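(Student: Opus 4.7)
The plan is to reduce the statement to G\aa rding's theorem on the invariance of hyperbolicity cones, applied to the homogenization of $p$. The RZ hypothesis on $p$ is, by definition, real-rootedness along every line through the origin; what is required here is the stronger fact that such a polynomial is real-rooted along every line through every interior point of its rigidly convex set. Granting this, the rest is essentially an exercise in translating coordinates and tracking connected components.

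The first step is to homogenize: set $P(x_{0},\mathbf{x}):=x_{0}^{d}p(\mathbf{x}/x_{0})$, a homogeneous polynomial of degree $d=\deg p$ on $\mathbb{R}^{n+1}$. The RZ condition on $p$ is equivalent to $P$ being hyperbolic in the direction $e_{0}=(1,\mathbf{0})$, and its hyperbolicity cone $\Lambda\subseteq\mathbb{R}^{n+1}$ intersects the affine hyperplane $\{x_{0}=1\}$ in precisely the topological interior of $\rcs(p)$. The hypotheses $\mathbf{x}_{0}\in\rcs(p)$ and $p(\mathbf{x}_{0})\neq 0$ therefore place $v_{0}:=(1,\mathbf{x}_{0})$ in the interior of $\Lambda$.

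Now I would invoke G\aa rding's theorem: the hyperbolicity cone is independent of the chosen interior direction, so $P$ is also hyperbolic with respect to $v_{0}$. A short calculation using the homogeneity of $P$ then shows that for every $a\in\mathbb{R}^{n}$ the polynomial $t\mapsto p(\mathbf{x}_{0}+ta)=P(1,\mathbf{x}_{0}+ta)$ has all real roots in $t$, since it arises as the restriction of $P$ to the affine line through $v_{0}$ in the direction $(0,a)$. That is, $p$ is real-rooted on every line through $\mathbf{x}_{0}$, which after the change of variables $\mathbf{x}\mapsto\mathbf{x}\pm\mathbf{x}_{0}$ is exactly the RZ condition for $q$.

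For the claim about the RCS, note that the zero set $V_{q}$ of $q$ is a translate of the zero set $V_{p}$ of $p$, and the translation is a homeomorphism of $\mathbb{R}^{n}$ carrying $\mathbf{0}$ to $\mathbf{x}_{0}$ (up to sign). Therefore the connected component of $\mathbf{0}$ in $\mathbb{R}^{n}\smallsetminus V_{q}$ is the image under this translation of the connected component of $\mathbf{x}_{0}$ in $\mathbb{R}^{n}\smallsetminus V_{p}$, and the latter is exactly the interior of $\rcs(p)$ because $\mathbf{x}_{0}$ lies in that component by hypothesis. Taking Euclidean closures and applying the translation completes the identification of the two rigidly convex sets up to the stated shift.

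The main obstacle, in my view, is the invocation of G\aa rding's theorem. If one declines to cite it as a black box, the standard proof runs a connectedness argument: the subset of $\Lambda$ consisting of those $v$ for which $P$ is hyperbolic is nonempty (it contains $e_{0}$), open (by continuous dependence of roots on coefficients), and closed in $\Lambda$ (by a Hurwitz-type limit argument that preserves real-rootedness), hence exhausts all of $\Lambda$. Everything else in the proof is routine bookkeeping about translations and connected components, plus the mild sign juggling needed to reconcile the chosen convention of translation with the definition of $q$.
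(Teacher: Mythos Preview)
Your argument is correct and is precisely the standard route to this fact: homogenize, invoke G\aa rding's theorem to move the hyperbolicity direction from $e_{0}$ to $(1,\mathbf{x}_{0})$, and then read off real-rootedness along lines through $\mathbf{x}_{0}$ via the substitution that trades the affine parameter for the homogeneous one. The paper, by contrast, gives no proof at all---its entire argument is the sentence ``This is well known.'' So you have supplied strictly more than the paper does, and what you have supplied is the proof that ``well known'' is implicitly pointing to.

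One minor remark: the ``short calculation'' you allude to does require the inversion $t\mapsto 1/s$ to pass from hyperbolicity of $P$ in the direction $v_{0}$ (which controls $s\mapsto P(w+sv_{0})$) to real-rootedness of $t\mapsto P(v_{0}+t(0,a))$; this is routine but worth making explicit, since the two restrictions are not literally the same polynomial. Your acknowledgement of the sign discrepancy in the definition of $q$ is also apt: as stated, $q(\mathbf{0})=p(-\mathbf{x}_{0})$, so the hypothesis should really concern $-\mathbf{x}_{0}$ (or $q$ should be defined with a plus sign); the paper's formulation is slightly off, and your ``mild sign juggling'' is the honest fix.
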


\begin{proof}
This is well known.
\end{proof}

This means in particular that we can change the origin arbitrarily within the RCS when we want to check RZ-ness of a polynomial. For the next result, define in general $\rcs(p)$ the connected component of the origin in $\mathbb{R}^{n}\smallsetminus V$ for \textit{any} polynomial $p$.

\begin{corolario}\label{coroRZ}
The following are equivalent for a polynomial
$p\in\mathbb{R}[\mathbf{x}]$
\begin{enumerate}
  \item $p$ is a RZ polynomial
  \item for a point $\mathbf{x}_{0}\in\rcs(p)$ with $p(\mathbf{x}_{0})\neq 0$ the restrictions to all the lines $l$ through $\mathbf{x}_{0}$ produce real-rooted polynomials, i.e., if for all $v\in\mathbb{R}^{n}$ the polynomials $p(\mathbf{x}_{0}+vt)\in\mathbb{R}[t]$ is real-rooted.
  \item for all the points $\mathbf{x}_{0}\in\rcs(p)$ with $p(\mathbf{x}_{0})\neq 0$ the restrictions to all the lines $l$ through $\mathbf{x}_{0}$ produce real-rooted polynomials, i.e., if for all $v\in\mathbb{R}^{n}$ the polynomials $p(\mathbf{x}_{0}+vt)\in\mathbb{R}[t]$ is real-rooted.
\end{enumerate}
\end{corolario}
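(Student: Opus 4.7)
The plan is to prove the cycle $(1)\Rightarrow(3)\Rightarrow(2)\Rightarrow(1)$, with the whole argument reduced to repeated applications of the preceding translation-conservation theorem (``RZ-ness is conserved under small enough perturbations of the origin'') combined with an unwinding of the definition of RZ.

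First I would dispose of $(3)\Rightarrow(2)$: this is essentially tautological, once we observe that the set of admissible base points is nonempty. Indeed $\mathbf{0}\in\rcs(p)$ and $p(\mathbf{0})=1\neq 0$ by comonicity, so the universal statement $(3)$ specializes immediately to a single witness. Next, for $(1)\Rightarrow(3)$, fix an arbitrary $\mathbf{x}_{0}\in\rcs(p)$ with $p(\mathbf{x}_{0})\neq 0$ and let $\tilde p(\mathbf{y}):=p(\mathbf{y}+\mathbf{x}_{0})$. The translation-conservation theorem tells us that $\tilde p$ is RZ, so by the very definition of RZ-ness applied to $\tilde p$, for every direction $v\in\mathbb{R}^{n}$ the univariate polynomial $\tilde p(tv)=p(\mathbf{x}_{0}+tv)$ is real-rooted. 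But this is exactly the restriction of $p$ to the line through $\mathbf{x}_{0}$ in direction $v$, which is the content of $(3)$.

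Finally, for $(2)\Rightarrow(1)$, take the witness $\mathbf{x}_{0}\in\rcs(p)$ provided by $(2)$ and again set $\tilde p(\mathbf{y}):=p(\mathbf{y}+\mathbf{x}_{0})$. The hypothesis in $(2)$ says exactly that $\tilde p(tv)=p(\mathbf{x}_{0}+tv)$ is real-rooted for every $v$, which, together with $\tilde p(\mathbf{0})=p(\mathbf{x}_{0})\neq 0$, is the definition of $\tilde p$ being RZ. Now apply the translation-conservation theorem in the reverse direction: the point $-\mathbf{x}_{0}$ lies in $\rcs(\tilde p)=\rcs(p)-\mathbf{x}_{0}$ (because $\mathbf{0}\in\rcs(p)$ by definition and $\mathbf{x}_{0}$ lies in the same connected component of $\mathbb{R}^{n}\smallsetminus V$ as $\mathbf{0}$, so the roles of $\mathbf{0}$ and $\mathbf{x}_{0}$ are interchangeable), and $\tilde p(-\mathbf{x}_{0})=p(\mathbf{0})\neq 0$. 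Thus the shifted polynomial $\tilde p(\mathbf{y}-(-\mathbf{x}_{0}))=p(\mathbf{y})$ is RZ, which is $(1)$.

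The only genuinely delicate point in the argument is the reversal step in $(2)\Rightarrow(1)$: one has to justify that translating back from $\tilde p$ to $p$ meets the hypotheses of the translation-conservation theorem, i.e.\ that $-\mathbf{x}_{0}\in\rcs(\tilde p)$. This is where the symmetry of ``being in the same connected component of $\mathbb{R}^{n}\smallsetminus V$'' is used: $\mathbf{0}$ and $\mathbf{x}_{0}$ sit in a common connected component of the complement of the real zero set, and this component is precisely what $\rcs$ picks out regardless of which of the two points one regards as the origin. Once this symmetric reading of $\rcs$ is in place, the rest of the proof is a direct chase of definitions through the translation theorem.
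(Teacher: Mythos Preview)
Your proof is correct and is exactly the argument the paper intends: the corollary is stated without proof, as an immediate consequence of the translation-conservation theorem, and your cycle $(1)\Rightarrow(3)\Rightarrow(2)\Rightarrow(1)$ via shifting the origin is precisely the natural way to unpack that. The only cosmetic remark is that for $(3)\Rightarrow(2)$ you invoke comonicity to use $\mathbf{0}$ as a witness, whereas the corollary as stated does not assume it; this is harmless here since comonicity is a standing assumption in the paper, and in any case the existence of \emph{some} admissible $\mathbf{x}_{0}$ is all that is needed.
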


Now we can use this corollary to prove the next fundamental theorem.

\begin{teorema}[Convexity and smooth deformations guarantee respectfulness]\label{respect}
Let $p\in\mathbb{R}[\mathbf{x}]$ be a generic (therefore smooth and with compact set of real zeros) RZ polynomial and $\prod_{i=1}^{s}l_{i}\in\mathbb{R}[\mathbf{x}]$ a product of tangent linear forms to the real zero set $V$ of $p$ with $l_{i}(\mathbf{0})=1$ such that the MSLMPR $\det(\diag(l_{i}))$ admits a deformation  $H\colon[0,1]\to\mathbb{R}[\mathbf{x}]$ starting at it and ending up in a RZ multiple $qp$ of $p$ along a path of smooth polynomials in $\mathbb{R}[\mathbf{x}]_{RZ}$. Then $\rcs(p)\subseteq\rcs(H(t))$ for all $t\in[0,1]$. This implies in particular $\rcs(p)=\rcs(qp)$.
\end{teorema}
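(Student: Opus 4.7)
My strategy is a connectivity argument. Define $T = \{t \in [0,1] : \rcs(p) \subseteq \rcs(H(t))\}$. I will show $T$ is both closed and open in $[0,1]$; together with $0 \in T$ and connectedness of $[0,1]$, this forces $T = [0,1]$ and proves the inclusion at every time. The equality $\rcs(p) = \rcs(qp)$ at $t=1$ then follows automatically, since $p \mid qp$ gives $V(p) \subseteq V(qp)$ and hence $\rcs(qp) \subseteq \rcs(p)$ for free, combining with the inclusion just proved.

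First, $0 \in T$: each linear form $l_i$ is tangent to $V$ at a point of the innermost ovaloid $\partial\rcs(p)$ and satisfies $l_i(\mathbf{0}) = 1$, so the hyperplane $\{l_i = 0\}$ supports the convex set $\rcs(p)$ from outside with $\rcs(p) \subseteq \{l_i \geq 0\}$. Intersecting gives $\rcs(p) \subseteq \bigcap_i \{l_i \geq 0\} = \rcs(\prod_i l_i) = \rcs(H(0))$. Next, $T$ is closed: I use the characterization that $y \in \rcs(H(t))$ iff the univariate polynomial $\varphi_{y,t}(s) := H(t)(sy)$, real-rooted by RZ-ness of $H(t)$ via Corollary \ref{coroRZ}, has no real root in the open interval $(0,1)$. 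Given $t_n \to t^*$ with $t_n \in T$, for each $y \in \rcs(p)$ the real roots of $\varphi_{y,t_n}$ move continuously as a multi-set and cannot reach $s=0$ since $\varphi_{y,t}(0) = 1$ always, so their limits stay outside $(0,1)$, placing $y \in \rcs(H(t^*))$ and giving $t^* \in T$.

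The substantive content is openness. For $t^* \in T \cap (0,1)$, smoothness of $H(t^*)$ combined with Theorem \ref{simpli} implies $H(t)$ stays smooth on a neighborhood of $t^*$ and $V(H(t))$ is locally a disjoint union of smooth ovaloids of constant topological type, so the innermost ovaloid bounding the convex set $\rcs(H(t))$ varies smoothly in $t$. Assuming openness fails, I would take $t_n \to t^*$ with witnesses $y_n \in \rcs(p)$, $y_n \notin \rcs(H(t_n))$; compactness of $\rcs(p)$ yields a limit $y \in \rcs(p)$, and inspecting where the segment $[\mathbf{0}, y_n]$ exits $\rcs(H(t_n))$ leads to three cases for the limit exit point on $[\mathbf{0}, y]$: (a) it is an interior point, (b) it is $\mathbf{0}$, or (c) it is $y$ itself. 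Case (b) is ruled out by $H(t^*)(\mathbf{0}) = 1 \neq 0$. Case (a) forces the ray from $\mathbf{0}$ through $y$ to be tangent to $\partial\rcs(H(t^*))$ at the exit point, a non-generic geometric configuration that I plan to exclude by combining smoothness of the ovaloid, convexity of $\rcs(H(t^*))$, and the freedom from Corollary \ref{coroRZ} to reparametrize the radial direction from a nearby basepoint. Case (c), when $y \in \text{int}(\rcs(p))$, directly contradicts $t^* \in T$, since then the innermost ovaloid of $V(H(t^*))$ passes through $y$ in the interior of $\rcs(p)$, forcing nearby points of $\rcs(p)$ outside $\rcs(H(t^*))$. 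The hard part is the remaining scenario $y \in \partial\rcs(p) \cap \partial\rcs(H(t^*))$, where the two smooth convex ovaloids share a tangential contact: here I must show that smoothness of the one-parameter deformation together with real-rootedness of every radial restriction of $H(t)$ forbids the innermost ovaloid of $V(H(t))$ from crossing inward; the topological obstruction from Theorem \ref{simpli} (no splitting or merging of ovaloids along a smooth RZ path) together with convexity should pin it to the outside of $\partial\rcs(p)$, completing openness.
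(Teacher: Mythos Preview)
Your clopen framework is a reasonable reorganization, but it has a structural gap precisely at the point the paper's argument treats as the crux: the endpoint $t=0$.

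First, your verification of $0\in T$ assumes every $l_i$ is tangent to the \emph{innermost} ovaloid $\partial\rcs(p)$, so that the supporting-hyperplane argument applies. The statement, however, takes linear forms tangent to the full real zero set $V$, which includes the outer ovaloids (compare Construction~\ref{cons}). A tangent hyperplane to an outer ovaloid need not be a supporting hyperplane of $\rcs(p)$ a priori; the paper's proof handles this separately, arguing that such a hyperplane cannot cut into $\rcs(p)$ because otherwise the subsequent smooth deformation would be forced through a singularity. You have not supplied any substitute for this step.

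Second, and more seriously, your openness argument is stated only for $t^{*}\in T\cap(0,1)$ and relies explicitly on smoothness of $H(t^{*})$. But $H(0)=\prod_i l_i$ is a hyperplane arrangement and is \emph{not} smooth: its singular locus is exactly the pairwise intersections of the $l_i$. So your argument gives no reason why $T$ is a neighbourhood of $0$, and without that the clopen argument collapses. The paper's proof is built around precisely this transition: it analyzes how the singular intersection points of $H(0)$ resolve for small $t>0$, arguing via Corollary~\ref{coroRZ} that the branches must break ``towards'' $\rcs(p)$ (otherwise a suitably chosen line from a point of $\rcs(p)$ would witness a drop in the number of real roots, contradicting RZ-ness of $H(t)$). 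That singularity-resolution step is the heart of the paper's argument, and your proposal has no analogue for it.

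Finally, even on $(0,1)$ your openness case analysis is, as you acknowledge, only a plan: case~(a) is dismissed as ``non-generic'' without justification, and the tangential-contact subcase of~(c) is left to an unspecified appeal to Theorem~\ref{simpli}. These would need to be filled in, but the decisive missing piece remains the behaviour at $t=0$.
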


\begin{proof}
First, convexity of the RCS of $p$ implies that the hyperplanes $l_{i}$ tangent to its border (innermost ovaloid of zeros of $p$) do not cross through $\rcs(p)$ in a smooth perturbation and therefore small enough smooth perturbations of these hyperplanes stay outside of $\rcs(p)$. Moreover, hyperplanes $l_{i}$ tangent to other zeros not in the innermost ovaloid cannot cross the set $\rcs(p)$ as this would break smoothness at some point in the deformation. The smooth perturbations of the hyperplanes along a path of smooth RZ polynomials gaurantees that the multiple zero sets of the product (that is the points of intersection of the  hyperplanes) nearest to $\rcs(p)$ of the initial product of tangent hyperplanes break in the direction approaching smoothly towards $\rcs(p)$. Otherwise, in view of Corollary \ref{coroRZ}, for some $t>0$, the polynomial $H(t)$ would not be RZ as the rupture in a tranversal direction would allow for a line with origin in a point within $\rcs(p)$ passing by this rupture to have a drop in the zeros resulting in a non real-rooted univariate restriction along this line. Thus, after $t=0$, by smoothness, no further intersections are are allowed (after $t=0$ all the ovaloids are defined by the branches they deform already) and therefore the border of the RCS of $H(1)=qp$ has to cover the border of $\rcs(p)$ without further branches of zeros entering inside $\rcs(H(t))$ for $t>0$. This implies that at the end of the deformation, we must have $\rcs(p)=\rcs(qp)$ as not further branches could otherwise get close enough to the zeros corresponding to the border of $\rcs(p)$ because this would produce intersections (and therefore break smoothness of the corresponding polynomials) at points $t\in(0,1)$.
\end{proof}

Now we just have to combine all the results we saw in order to produce a proof, which is basically already done at this point. We do this combination explicitly in the next section for clarity.

\comm{
\section{Transducting compactness}

going from the compactness of the zero set from the compactness of the zero set to the compactness of the sets around the point representing the polynomial.
}

\section{Combining results and observation into a proof}

%The opens should not depend, different lines for each matrix approach - thats how you avoid the argument filling the whole plane - Use theorem 4

We can write down the main statement. The results above combine to prove the following.

\begin{teorema}\label{glc}
  The generalized Lax conjecture is true.
\end{teorema}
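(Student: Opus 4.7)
The plan is to assemble the machinery developed in the previous sections into a single pipeline. First, by Proposition \ref{wlogcom} together with the density statement in Theorem \ref{simpli}, I reduce to the case where $p$ is comonic, generic (hence smooth), of even degree $d=2k$, and has compact real zero set $V$; the remaining cases (non-smooth, odd degree, non-compact real zero set) will be recovered at the end by a limit argument, since each limit polynomial is an accumulation point of polynomials for which the conjecture has already been established, and the RCS containment is closed under Hausdorff limits.

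Next, I apply Construction \ref{tangcov} to manufacture a finite tangent cover. For each $U \in \binom{V}{N}$ I form the product $p_U = \prod_{u \in U} l_u$ with $l_u$ tangent to $V$ at $u$ and $l_u(\mathbf{0})=1$, so that the trivial MSLMP $\diag((l_u)_{u\in U})$ provides a base representation whose perturbations, by Theorem \ref{keyth}, sweep out an open band of RZ polynomials around $p_U$ near the ovaloids touched by $U$. Since $V$ is compact, finitely many such $U$, collected into a set $F$, suffice to cover $V$ entirely by such rigid bands. Taking $P = \prod_{U\in F} p_U$ and the block-diagonal MSLMP assembled from the individual $\diag((l_u)_{u\in U})$, Theorem \ref{openbands} guarantees that the reachable perturbation band around this block-diagonal representation contains a full open rigid neighbourhood of all the ovaloids of $V$, and in particular contains $p$ itself.

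Applying Theorem \ref{keyth} to this block-diagonal base then yields a cofactor $q$ with $q(\mathbf{0})=1$ and symmetric matrices $A_1,\dots,A_n$ such that $qp = \det(I + x_1 A_1 + \cdots + x_n A_n)$, with the matrix polynomial on the right lying within the prescribed perturbation distance of the block-diagonal base. By simple connectedness of the normalized RZ polynomials (Theorem \ref{simpli}) one can realise this perturbation by a homotopy $H\colon[0,1]\to\mathbb{R}[\mathbf{x}]_{\mathrm{RZ}}$ of smooth polynomials joining $P$ to $qp$, i.e.\ exactly the kind of smooth RZ deformation to which Theorem \ref{respect} applies. That theorem then forces $\rcs(p) \subseteq \rcs(H(t))$ for all $t$, and in particular $\rcs(p) \subseteq \rcs(qp)$, which is the remaining ingredient in the statement of the conjecture.

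The step I expect to be the main obstacle, and where the argument has to be set up most carefully, is the verification that the finite tangent cover actually places $p$ inside the perturbation reach of the block-diagonal base, rather than merely close to it in the coefficient metric of Definition \ref{naivepoldis}. This is precisely the two-views translation flagged in Remarks \ref{twoviews} and \ref{compactnessviews}: closeness in the root/ovaloid picture must be converted into closeness in the coefficient picture strong enough that Theorem \ref{keyth} applies, and one must also check that the openness provided by Theorem \ref{openbands} survives the concatenation into a single block-diagonal matrix of very large size. Once this bookkeeping between the rigid (root-space) and free (coefficient-space) pictures is secured, the remaining steps above combine mechanically to give Theorem \ref{glc}.
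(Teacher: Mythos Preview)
Your proposal is correct and follows essentially the same pipeline as the paper's own proof: reduce via Proposition \ref{wlogcom} to the generic smooth compact case, build the finite tangent cover from Constructions \ref{cons}/\ref{tangcov} using compactness of $V$, glue the resulting diagonal representations via Theorem \ref{openbands}, invoke Theorem \ref{keyth} to land on a multiple $qp$, and then apply Theorem \ref{respect} along a smooth RZ homotopy supplied by Theorem \ref{simpli} to secure $\rcs(p)=\rcs(qp)$. The only cosmetic discrepancy is that you cite the \emph{simple connectedness} clause of Theorem \ref{simpli} for the existence of the smooth RZ path, whereas what is actually used (both here and in the paper) is the openness of the smooth locus together with connectedness; your concluding discussion of the root-space versus coefficient-space bookkeeping matches the paper's Remarks \ref{twoviews} and \ref{compactnessviews}.
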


\begin{proof}
We start with a RZ degree $d$ polynomial $p$ in $n$ variables. Wlog, we suppose that $p$ is generic, smooth and with compact real zero set. We use Proposition \ref{comoniccomplex} to focus our attention to the real zero set $V$ of $p$. As $p$ is RZ, if we manage to form another (RZ) polynomial that has the same roots as $p$ along each real line through the origin, we can ensure that it coincides with $p$. We will form this polynomial as a factor of a bigger polynomial. We choose a radial distance $\dist$ around the ovaloids of $p$. Perturbing $p$ within a small enough neighbourhood of their ovaloids with respect to this distance corresponds to small perturbations of its coefficients. Choose a family $\{L_{1},\dots L_{k}\}$ of products of linear forms $l$ with $l(\mathbf{0})=1$ such that the union of the opens guaranteed by Theorem \ref{keyth} cover the whole RZ set of $p$ (all its ovaloids). Choose moreover the $L_{i}$ with degree high enough to allow for deformations covering opens around polynomials of degree $d$. By compactness of the RCS of $p$ (by assumption), such $k$ exists and it is finite. We can choose these products of linear forms $L_{i}$ following Construction \ref{cons}. Now, using Theorem \ref{openbands} we can glue together these $L_{i}$ forming a MSLMP allowing a deformation producing a multiple $qp$ of $p$. By Theorem \ref{simpli}, as this glueing allows the expression of a whole open around $p$, this deformation can happen in through smooth RZ polynomials. Now Theorem \ref{respect} guarantees that, along such smooth deformation, $\rcs(p)$ is respected and therefore $\rcs(qp)=\rcs(p)$ and $qp$ admits a MSLMPDR (monic symmetric linear matrix polynomial determinantal representation). This finishes the proof.
\end{proof}

The theorem above settles our research about approximations of RCS through adequate linear forms. In fact, the proof tells us even more as it tells us that we can reach the desired representation through deformations of linear forms sufficiently close to the original polynomial without requiring strange point-wise representations that pop-up out of some algebraic structure. The proof above tells us that this is a topological property of approximations of RCSs by linear forms and not necessarily an algebraic property. The algebraicity comes when one wants to consider notions of minimal representations or representations with some additional structure. This is a nice question by itself that goes into the computational or effective setting. However, now that we know that topologically such representation exists, it is the correct time to ask the following (more algebraic) question.

\begin{cuestion}
Given a RZ polynomial $p$, does there exists an algorithm giving a MSDR representation of $p$?
\end{cuestion}

\section{Objections and journey}

Approaches like the one followed here to produce a prove of the GLC might have been seen as problematic before because of a number of reasons. In particular, one encounters these objections while thinking about this problem and they can deter some attempts to build a proof as que did here. But these objections actually push in a natural way towards the correct solution. We comment how this happened here. 

The first objection was commented in the first section and has to do with the dimensionality obstruction. However, this objection just tells us that we need a strong mechanism able to produce \textit{astronomically} big cofactors in a \textit{natural} way. Our natural way here was to look at the topology due to the uncontrollable nature of the algebraic requirements of these huge objects. And, in this way, this obstruction pushed us towards looking for the topological proof presented here.

\begin{remark}[Dimensionality explosion calls for topology]
Theorem \ref{dime} already points out to the fact that controlling the cofactor agebraically can be problematic. Theorem \ref{uncon} deepens into this problem even for \textit{just} approximations, not to talk about actual representaions. However, Theorem \ref{simpli} tells us that the topology of the space of RZ polynomials is easy to deal with. The combination of these facts points immediately to the topologicla approach we took there.
\end{remark}

Theorem \ref{uncon} introduces naturally the concept of approximations into our setting and inquirirs. We will quickly evolve from mere approximation to the more helpful concept of deformation. First, however we have to formalize these concepts. This is what pushes us to consider \textit{measures of distances} around ovaloids.

\begin{remark}[Measuring approximations]
In order to understand the obstruction posed by Theorem \ref{uncon}, we need to formalize the concepts of approximations introducing a notion of distance. This is done in Definition \ref{defapr} and Remark \ref{defapr}. This initial assessment pushes the developments in Section \ref{measuring}.
\end{remark}

As we decide to immerse ourselves in the topology, we observe that we have to distinguish between two topolgies that will be deeply related: the topology of the set of real zeros and the topology of the set of RZ polynomials. This is what we do in Remarks \ref{twoviews} and \ref{compactnessviews}. These first view into the topological nuances of our problem allow us to reduce the setting without loss of generality through Proposition \ref{wlogcom}. 

\begin{remark}
Thanks to a few topological observations we can readily reduce the setting to look only at polynomials with very nice real zero sets. In particular, we focu our attention in smooth, comonic, generic RZ polynomials with compact real zero set. And we prove that we can do this without losing generality but gaining much more clarity. This reduction will be crucial to develop the ultimate proof of the GLC we provide here.
\end{remark}

We establish what information completely determines a polynomial. This leads us firs to Theorem \ref{comonicreal}, which will evolve into Theorem \ref{comoniccomplex}. This information however will give as a key point about how should we think about our approxiamtion to the zero set of $p$ if we want to be successful in our quest.

\begin{remark}[Avoiding the TV screen objection]
Along Section \ref{measuring}, whenever we measure closeness of polynomials, we take care of measuring closeness with respect to all the ovaloids (connected components) composing the real zero set. We do this to ensure that our notion of closeness as measured by these functions has a meaning in terms of factors as shown in Theorems \ref{distmult} and \ref{distmultall}. In particular, getting closer to \textit{all} ovaloids (and not just some of them) is what allow us to use Theorem \ref{comonicreal} to establish Theorems \ref{distmult} and \ref{distmultall} because thanks to looking at all ovaloids we can ensure that, on each line through the origin, the roots coincide, allowing us to effectively apply the theorem. Thus, as shown by Example \ref{TVscreen}, the approach we take here cannot work for RZ polynomials because, in terms of these measures, they are always too far from our approximations using products of linear forms, no matter how much we increase the degree of the product. This is an important point because one might easily fall in the trap of trying to approximate just the RCS of $p$ using products of linear forms. If one do this, everything else in our proof would seem to work in the case of the TV screen polynomial because its real zsro set is smooth and convex. However, as it is not a RZ polynomial, this would tell us that there is something wrong with approach. What is wrong is to forget to measure the distance to non-real roots over real lines not the approach itself. We took so much care about this here because this example might have made people skeptical of this approach before we introduced the complete measures in Section \ref{measuring}.
\end{remark}

In regard to these measures, the compactness assumption is also of great help. In particular, as noticed before Remark \ref{heltonmeasure}, we are measuring volumes of differences of measurable sets with finite measure.

\begin{remark}[Compactness and finiteness]
This finiteness implies that, for RZ polynomials, these measures always give us a finite number that we can reduce increasing our accuracy through additional linear forms. This feature is something that, as we see ultimately, is key in our approach.
\end{remark}

Additionally, these measures are able to detect closeness beyond algebraic constraints because they can detect two polynomials being close even when a factorization is not yet available. This also a great feature.

\begin{remark}[Closeness beyond algebraicity]
Multivariate polynomial, contrary to univariate ones, can be very close to each other before the existence of an actual factorization let us see this fact. For this reason, for multivariate polynomials is better to explore this notion of closeness line by line, as the measures introduced in Section \ref{measuring} do. In this way, we overcome through a clever trick algebraic restrictions that do not let us see the whole picture immediately.
\end{remark}

This notion of closeness also allow us to work with cofactors without having to know them explicitly as long as they are \textit{far enough} from our target polynomial. More than that, as noted in the remark above, these do not need to exist algebraically for us to make an assessment of closeness through these tools.

\begin{remark}[Factors]
Lately, however, these factors will actually turn out to exist at the points when we reach our target polynomial for algebraic reasons. It is just an advantage being able to proceed without having to think about these factorizations while we work out the topological setting.
\end{remark}

Finally, all these insights combine into forming the object giving us the correct notion of closeness in Definition \ref{rhfd}, which is not a distance and therefore does not provide a topology but still let us perform a topological analysis using that the topologies we want to study are already defined and closely connected to this measure, as noted in Remark \ref{jum}.

The last feature we comment is that of compactness and how it allows us to cover the zero set we want to reach. In particular, Construction \ref{tangcov} allows us to cover all the real zero set using just a finite porduct of linear forms thanks to this compactness. Notice that this avoids the objection someone could make about taking tangent hyperplanes always near to some point in a way that the rest of the real zeros always lie far. We do not allow this, we take tangents that are well distributed in the sense that their opens of (determinantal) deformations allowed end up covering the whole real zero set of our target polynomial. It is this total covering of the power of the (determinantal) deformation we use what let us ultimately reach through a smooth generic deformation our target polynomial in a way that guarantee the conservation of the original (target) RCS although new factors appear far away.

The objections commented above were fundamental in shaping the journey of the proof that we presented here. They show a picture of nuances that had to be correctly addressed before a topological argument could be available and guided us through the process of building such proof through the presented constructions finally in Theorem \ref{glc}.

\comm{

\section{Collecting functions, forms, rails and unsolvabilities}

\section{Description and determinations of polynomials (ideals) by values and zeros}

\section{From open to closed, limits and continuity}

The open spaces where we have definition can be closed using limits and continuity. At the end you continue a bit more --- Posiblemente haya que usar degree bounds

\section{Radiality as a solution}

\section{Radiality and RZ polynomials}

\section{RZ polynomials and determinants}

\section{Determinantal representations}

\section{Compactness and pointwise covering of real zero set}

Look around the curves... there should be an open, and compactness of the RZ set helps for that.

\section{Radial Hausdorff-Fréchet distance between polynomial ideals}

\section{Measuring distances in ideals}

\section{The generic RZ polynomial is compact and smooth}

\section{Linear product of pointwise approximations}

\section{Deformation of products}

\section{Rail expressions and algebra}

\section{Power of rail expressions}

\section{Rail expressions limits deformation freedom}

\section{Rifting linear forms convexely and smoothly}

\section{Smooth convex rifting is respectful}

\section{Representative power of determinantal representations}

\section{Deformations of linear products and determinantal representations}

\section{Limit exceptional polynomials are not generic}

\section{Jumps through the origin and continuity}

\section{Examples and counterexamples showing how the process work}

TV screen and complexity

\section{Putting all together: a proof}

Here we cite delete \cite{arcak2016networks}

\section{Conclusion and directions}

}

\printbibliography

@book{arcak2016networks,
  title={Networks of dissipative systems: compositional certification of stability, performance, and safety},
  author={Arcak, Murat and Meissen, Chris and Packard, Andrew},
  year={2016},
  publisher={Springer}
}

@article{helton2007linear,
  title={Linear matrix inequality representation of sets},
  author={Helton, J William and Vinnikov, Victor},
  journal={Communications on Pure and Applied Mathematics: A Journal Issued by the Courant Institute of Mathematical Sciences},
  volume={60},
  number={5},
  pages={654--674},
  year={2007},
  publisher={Wiley Online Library}
}

@article{nuij1968note,
  title={A note on hyperbolic polynomials},
  author={Nuij, Wim},
  journal={Mathematica Scandinavica},
  volume={23},
  number={1},
  pages={69--72},
  year={1968},
  publisher={JSTOR}
}

@article{amini2019spectrahedrality,
  title={Spectrahedrality of hyperbolicity cones of multivariate matching polynomials},
  author={Amini, Nima},
  journal={Journal of Algebraic Combinatorics},
  volume={50},
  number={2},
  pages={165--190},
  year={2019},
  publisher={Springer}
}

@inproceedings{raghavendra2019exponential,
  title={Exponential lower bounds on spectrahedral representations of hyperbolicity cones},
  author={Raghavendra, Prasad and Ryder, Nick and Srivastava, Nikhil and Weitz, Benjamin},
  booktitle={Proceedings of the Thirtieth Annual ACM-SIAM Symposium on Discrete Algorithms},
  pages={2322--2332},
  year={2019},
  organization={SIAM}
}

@MISC {4942793,
    TITLE = {Prove that two multivariate polynomials that are equal at every point over the reals are identical as formal polynomials},
    AUTHOR = {Jack},
    HOWPUBLISHED = {Mathematics Stack Exchange},
    NOTE = {version: 2024-07-07},
    URL = {https://math.stackexchange.com/q/4942793}
}

@book{hausdorff1978grundzuge,
  title={Grundzuge der mengenlehre},
  author={Hausdorff, Felix},
  volume={61},
  year={1978},
  publisher={American Mathematical Soc.}
}

@article{frechet1906quelques,
  title={Sur quelques points du calcul fonctionnel},
  author={Fr{\'e}chet, Maurice},
  year={1906}
}

@article{pompeiu1905continuite,
  title={Sur la continuit{\'e} des fonctions de variables complexes},
  author={Pompeiu, Dimitrie},
  journal={Annales de la Facult{\'e} des sciences de l'Universit{\'e} de Toulouse pour les sciences math{\'e}matiques et les sciences physiques},
  volume={7},
  number={3},
  pages={265--315},
  year={1905}
}

@article{quarez2012symmetric,
  title={Symmetric determinantal representation of polynomials},
  author={Quarez, Ronan},
  journal={Linear algebra and its applications},
  volume={436},
  number={9},
  pages={3642--3660},
  year={2012},
  publisher={Elsevier}
}

@article{helton2006noncommutative,
  title={Noncommutative convexity arises from linear matrix inequalities},
  author={Helton, J William and McCullough, Scott A and Vinnikov, Victor},
  journal={Journal of Functional Analysis},
  volume={240},
  number={1},
  pages={105--191},
  year={2006},
  publisher={Elsevier}
}

@article{stefan2021short,
  title={A short proof of the symmetric determinantal representation of polynomials},
  author={Stefan, Anthony and Welters, Aaron},
  journal={Linear Algebra and its Applications},
  volume={627},
  pages={80--93},
  year={2021},
  publisher={Elsevier}
}

@article{netzer2015smooth,
  title={Smooth hyperbolicity cones are spectrahedral shadows},
  author={Netzer, Tim and Sanyal, Raman},
  journal={Mathematical Programming},
  volume={153},
  number={1},
  pages={213--221},
  year={2015},
  publisher={Springer}
}
\end{document}